\newtheorem{theorem}{Theorem}[section]
\newtheorem{lemma}[theorem]{Lemma}
\newtheorem{proposition}[theorem]{Proposition}
\newtheorem{corollary}[theorem]{Corollary}
\theoremstyle{definition}
\newtheorem{definition}[theorem]{Definition}
\theoremstyle{remark}
\newtheorem{remark}[theorem]{Remark}
\newtheorem{example}[theorem]{Example}
\newtheorem{construction}[theorem]{Construction}
\newtheorem{notation}[theorem]{Notation}
\newcommand{\Q}{\mathbb{Q}}
\newcommand{\Z}{\mathbb{Z}}
\newcommand{\N}{\mathbb{N}}
\newcommand{\Cx}{\mathbb{C}}
\newcommand{\CAlg}{\mathrm{CAlg}}
\newcommand{\Sp}{\mathrm{Sp}}
\newcommand{\dr}[3]{\Omega^{#3}_{#1/#2}}
\newcommand{\Alg}{\mathrm{Alg}}
\newcommand{\op}{\mathrm{op}}
\newcommand{\C}{\mathcal{C}}
\DeclareMathOperator{\colim}{\mathrm{colim}}
\newcommand{\colimarg}[1]{\underset{#1}{\colim}}
\newcommand{\Map}{\mathrm{Map}}
\newcommand{\Spec}{\mathrm{Spec}}
\newcommand{\Fil}{\mathrm{Fil}}
\newcommand{\map}{\mathrm{map}}
\renewcommand{\S}{\mathbb{S}}
\newcommand{\Spc}{\mathcal{S}}
\newcommand{\llrrpar}[1]{(\!(#1)\!)}
\newcommand{\essLaur}{\llbracket t^{\pm1}\rrbracket}
\newcommand{\llrrbra}[1]{\llbracket #1\rrbracket}
\newcommand{\gr}{\mathrm{gr}}
\newcommand{\Gr}{\mathrm{Gr}}
\newcommand{\co}{\mathrm{co}}
\renewcommand{\bar}{\mathrm{bar}}
\newcommand{\Ch}{\mathrm{Ch}}
\renewcommand{\O}{\mathcal O}
\newcommand{\unit}{\mathbf{1}}
\newcommand{\aug}{\mathrm{aug}}
\newcommand{\Free}{\mathrm{Free}}
\newcommand{\Poly}{\mathrm{Poly}}
\newcommand{\Mod}{\mathrm{Mod}}
\newcommand{\can}{\mathrm{can}}
\renewcommand{\H}{\mathrm{H}}
\newcommand{\HKR}{\mathrm{HKR}}
\newcommand{\Day}{\mathrm{Day}}
\newcommand{\dR}{\mathrm{dR}}
\newcommand{\an}{\mathrm{an}}
\newcommand{\GL}{\mathrm{GL}}
\newcommand{\R}{\mathbb{R}}
\renewcommand{\unit}{\mathbbm{1}}
\newcommand{\CDGA}{\mathrm{CDGA}}
\newcommand{\F}{\mathcal{F}}
\newcommand{\Shv}{\mathrm{Shv}}
\newcommand{\U}{\mathcal{U}}
\newcommand{\shvHH}{\mathcal{HH}}
\newcommand{\shvHP}{\mathcal{HP}}
\DeclareMathOperator{\HH}{\mathrm{HH}}
\newcommand{\HC}{\mathrm{HC}^-}
\DeclareMathOperator{\HP}{\mathrm{HP}}
\DeclareMathOperator{\Fun}{\mathrm{Fun}}
\newcommand{\lbar}[1]{\mkern 1.5mu\overline{\mkern-1.5mu#1\mkern-1.5mu}\mkern 1.5mu}
\title[Periodic Cyclic Homology over $\Q$]{Periodic Cyclic Homology over $\Q$}
\author[Konrad Bals]{Konrad Bals}
\address{WWU M\"unster, Mathematisches Institut, Einsteinstr. 62, 48149 M\"unster, Germany}
\email{konrad.bals@uni-muenster.de}
\begin{document}

	\begin{abstract}
		Let $X$ be a derived scheme over an animated commutative ring of characteristic 0. We give a complete description of the periodic cyclic homology of $X$ in terms of the Hodge completed derived de Rham complex of $X$. In particular this extends earlier computations of Loday-Quillen to non-smooth algebras. Moreover, we get an explicit condition on the Hodge completed derived de Rham complex, that makes the HKR-filtration on periodic cyclic homology constructed by Antieau and Bhatt-Lurie exhaustive.
	\end{abstract}
	
	\maketitle

	\section{Introduction}
	
	For a commutative ring $k$ and a $k$-algebra $R$, the Hochschild homology $\HH(R/k)$ gives an element in the derived category $D(k)$ of $k$. It has proven itself to be an interesting invariant, appearing for example in trace methods computing algebraic $K$-theory or in Connes' non-commutative geometry. It was also Connes in \cite{Con85} who constructed the cyclic structure on Hochschild homology to define negative cyclic homology $\HC(R/k):=\HH(R/k)^{hS^1}$ and later periodic cyclic homology $\HP(R/k):=\HH(R/k)^{tS^1}$ and proving a relation between $\HC$ of smooth functions on a manifold and de Rham cohomology of the manifold. Transferring Connes' geometric interpretation into algebraic observations in \cite{LQ84} Loday and Quillen compute the homotopy groups $\HC_*(R/k)$ in terms of algebraic de Rham cohomology in many cases. For the purpose of this paper passing here to the Tate-construction, they prove:
	
	\begin{theorem}[\cite{LQ84}]\label{LQ} Assume $\Q\subset k$ commutative and $R$ a smooth commutative $k$-algebra, then
	\[\HP_*(R/k)\cong \bigoplus_{n\in\Z}\H_{\dR}^{*-2n}(R;k)\]
	\end{theorem}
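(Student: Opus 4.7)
The plan is to reduce the statement to the classical Hochschild--Kostant--Rosenberg (HKR) theorem and then exploit the characteristic $0$ hypothesis to control the $S^1$-action.

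First I would invoke the HKR theorem, which for smooth $R/k$ gives an equivalence of complexes
\[ \HH(R/k) \simeq \bigoplus_{n\geq 0} \dr{R}{k}{n}[n]. \]
This is the input at the level of Hochschild homology; the $S^1$-action is additional structure that must be tracked. The key classical observation is that the induced Connes $B$-operator corresponds, under HKR, to the de Rham differential $d\colon \dr{R}{k}{n} \to \dr{R}{k}{n+1}$, so that $\HH(R/k)$ becomes equivalent to the de Rham complex $\dr{R}{k}{\bullet}$ viewed as a mixed complex, or equivalently as an object of $\D(k)^{\B S^1}$.

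Next, I would promote this to an $S^1$-equivariant equivalence. In general HKR is only compatible with the $S^1$-action up to a filtration (the HKR filtration), but in characteristic $0$ this filtration splits: the $\lambda$/Adams operations on $\HH$ provide a weight decomposition, and the weight-$n$ summand sits in homological degree $n$ only, giving a canonical splitting. Concretely, one picks idempotents $e_n \in \Q[\Sigma_n]$ that project onto the sign representation and extracts the Hodge pieces $\dr{R}{k}{n}[n] \subset \HH(R/k)$ in an $S^1$-equivariant way. Together with the identification of $B$ with $d$, this upgrades HKR to an equivalence $\HH(R/k) \simeq \bigl(\dr{R}{k}{\bullet}, d\bigr)$ in $\D(k)^{\B S^1}$.

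Finally, I would apply the Tate construction $(-)^{tS^1}$. For a bounded-below mixed complex, $(-)^{tS^1}$ computes the $2$-periodification of the associated total complex, so
\[ \HP(R/k) \;\simeq\; \Bigl(\prod_{n\in\Z}\dr{R}{k}{\bullet}[2n]\Bigr)_{\text{tot}}, \]
and taking homotopy groups yields the desired
\[ \HP_*(R/k) \;\cong\; \bigoplus_{n\in\Z}\H_{\dR}^{*-2n}(R;k). \]
(The product becomes a sum on homotopy groups because $R$ is smooth, so $\dr{R}{k}{n}$ vanishes above the relative dimension, making the de Rham complex bounded.)

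The main obstacle is step two: constructing a genuinely $S^1$-equivariant HKR equivalence rather than just one on underlying complexes. This is precisely the place where characteristic $0$ enters essentially, since without rational coefficients the HKR filtration need not split and one only obtains an associated graded identification, which is the viewpoint that will have to be refined in the later sections of the paper when extending beyond smooth algebras.
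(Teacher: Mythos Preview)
Your proposal is correct and follows the classical Loday--Quillen line of argument, but the paper takes a somewhat different route to the $S^1$-equivariant HKR equivalence. Rather than splitting the HKR filtration via Adams operations or Eulerian idempotents, the paper first proves a formality statement: over $\Q$ one has $C_*(S^1,k)\simeq\Lambda_k(\epsilon)$ as cocommutative bialgebras (Proposition~\ref{formal}), so that $S^1$-equivariant objects in $D(k)$ are the same as $\Lambda_k(\epsilon)$-modules (Corollary~\ref{formalaction}). This lets one view the de Rham complex $\dr{R}{k}{H}$ directly as an object of $\CAlg_k^{BS^1}$, and then the universal property of $\HH(R/k)$ in that category produces the comparison map, which the ordinary HKR computation shows is an equivalence (Proposition~\ref{HKR}). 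The Tate construction is then computed by an explicit resolution (Lemma~\ref{computation} and the remark following it), matching your ``$2$-periodification'' step.

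What each approach buys: your idempotent argument is elementary and close to the original source, but it is a priori only additive and one must separately argue multiplicativity and naturality. The paper's formality-plus-universal-property argument yields the equivalence as one of \emph{commutative algebras with $S^1$-action}, naturally in $R$, which is exactly the extra structure needed later to pass to animated algebras via left Kan extension and to analyse the multiplicative Tate filtration. So your argument suffices for Theorem~\ref{LQ} as stated, but the paper's method is what makes the generalisation to Theorem~\ref{maintheo} go through cleanly.
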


In this paper we give a generalization of this computation to the non-smooth and non-affine situation.\footnote{There recently has been a great development in understanding filtrations on periodic cyclic homology in this generality, or in fact without the rational assumption, e.g. \cite{Ant19}, \cite{Rak20} and \cite{MRT22}, and in particular we will answer exhaustiveness question in the rational case.}
By the classical observation that $\HH(R[S^{-1}]/k)\simeq\HH(R/k)\otimes_RR[S^{-1}]$ for every affine open $\Spec(R[S^{-1}])\subset\Spec R$ Hochschild homology extends to a sheaf $\shvHH_k$ in the Zariski\footnote{In fact by \cite{BMS19} 3.4. even in the fpqc topology via a different argument.} topology on schemes over $k$ (c.f. \cite{GW91}). In fact, similarly we get a sheaf $\shvHP_k$ extending periodic cyclic homology. We recall the details in Appendix \ref{scheme} and write $\HH(X/k):=\Gamma(X,\shvHH_k)$ and $\HP(X/k):=\Gamma(X,\shvHP_k)$.

Moreover, Hochschild homology as a functor $\CAlg_k^\heartsuit\to D(k)$ from discrete $k$-algebras to $D(k)$ is left Kan extended from discrete polynomial algebras\footnote{If $P_\bullet$ is a simplicial resolution of the $k$-algebra $R$, it suffices to check that $|\HH(P_\bullet/k)|\simeq \HH(R/k)$.} and, thus, further extends to a sifted colimit preserving functor from the category of animated commutative (i.e. simplicial commutative) $k$-algebras $\CAlg^\an_{k/}$. So putting both generalizations together and writing $L\dr Xk*$ for the derived de Rham complex of a derived scheme $X$ over an animated $\Q$-algebra $k$, we can state our main theorem in a great generality. In particular, if $k$ is discrete and $X=\Spec(R)$ for a discrete $k$-algebra $R$, this gives new results on the periodic cyclic homology of ordinary algebras.

\begin{theorem}\label{maintheo}
	Given an animated commutative ring $k$ with $\Q\subset\pi_0(k)$ and $X$ a derived $k$-scheme, we have
	\[\HP(X/k)\simeq \prod_{n\in\Z}\widehat{L\dr Xk*}[-2n]\]
	where $\widehat{L\dr Xk*}$ is the completion of $L\dr Xk*$ with respect to the Hodge filtration $L\dr -k{\geq\bullet}$.
\end{theorem}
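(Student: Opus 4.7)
The plan is to proceed in three steps: reduce to the affine case by Zariski descent, establish the rational HKR splitting of Hochschild homology by left Kan extension, and then compute $\HP$ as the Tate construction of the resulting mixed complex. By Appendix \ref{scheme}, periodic cyclic homology is defined as the global sections of the Zariski sheaf $\shvHP_k$, and the Hodge-completed derived de Rham complex $\widehat{L\dr{-}{k}{*}}$ satisfies Zariski descent as well. It therefore suffices to prove the theorem when $X = \Spec R$ for $R$ an animated commutative $k$-algebra.

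For such $R$, the classical HKR theorem together with the identification of Connes' $B$-operator with the de Rham differential in characteristic zero provides, for smooth $k$-algebras, an $S^1$-equivariant equivalence
\[\HH(R/k) \simeq \bigoplus_{n \geq 0} \dr{R}{k}{n}[n],\]
where the $S^1$-action on the right is generated by $d_{\dR}$. Both sides commute with sifted colimits in $R$, so left Kan extension along polynomial $\Q$-algebras yields an $S^1$-equivariant equivalence
\[\HH(R/k) \simeq \bigoplus_{n \geq 0} L\dr{R}{k}{n}[n]\]
for every animated commutative $k$-algebra $R$, with the same description of the $S^1$-action.

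To pass from $\HH$ to $\HP$, I would apply the standard formula for the Tate construction of a mixed complex $(M, B)$, namely $M^{tS^1} \simeq M((u))$ with $u$ of homological degree $-2$ and differential $d_M + u B$. Plugging in the HKR decomposition and regrouping terms of the same total degree, one obtains
\[\HP(R/k) \simeq \Bigl(\bigoplus_{n \geq 0} L\dr{R}{k}{n}[n]\Bigr)^{tS^1} \simeq \prod_{n \in \Z} \widehat{L\dr{R}{k}{*}}[-2n].\]
The Hodge completion appears precisely because the Tate construction replaces the finite direct sum indexed by Hodge weight with an infinite product, which is exactly the completion along the Hodge filtration $L\dr{-}{k}{\geq \bullet}$.

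The hard part will be the last step: carefully matching the infinite product arising from the $u$-adic expansion of the Tate construction with the Hodge completion of the derived de Rham complex, with compatible gradings and convergence. This identification is essentially equivalent to the exhaustiveness question for the HKR filtration on $\HP$ mentioned in the introduction, and the rational hypothesis is crucial in order to split the HKR filtration on $\HH$ and reduce this matching to an explicit computation.
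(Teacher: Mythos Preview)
Your approach is correct and essentially the same as the paper's: reduce to affines via the sheaf property, invoke the $S^1$-equivariant rational HKR splitting of $\HH$, and then analyze the Tate construction of the resulting mixed complex. The paper packages the last step as an equivalence of complete filtrations $\left(\Fil_H^\bullet L\dr Rk*\otimes^{\Day}\Fil_T^\bullet k^{tS^1}\right)^\wedge\simeq \Fil_T^\bullet\HP(R/k)$ (Corollary \ref{main filtration}), and the ``regrouping''/colimit--limit interchange you correctly flag as the hard part is exactly the content of Theorem \ref{main theorem}, carried out by splitting $\Fil_T^\bullet k^{tS^1}$ rationally and using that the truncations $L\dr Rk{\leq m}$ are bounded below.
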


The key ingredient in the proof is to understand how the Tate-construction behaves under the passage from smooth algebras to general or even animated algebras and it is this behavior that lets the product appear on the right hand side.

In \cite{Ant19} Antieau constructs the HKR-filtration on $\HP(X/k)$ with $n$-th associated graded $\widehat{L\dr Xk*}[2n]$.
If $k$ is an (animated) $\Q$-algebra we can give a complete identification of this HKR-filtration in terms of the equivalence of Theorem \ref{maintheo} and we prove

\begin{theorem}\label{HKR-filtration}
	In the situation of Theorem \ref{maintheo} the HKR-filtration on $\HP(X/k)$ corresponds to the ascending partial product filtration on $\prod_{n\in\Z}\widehat{L\dr Xk*}[-2n]$, that is
	\[\Fil_\HKR^i\HP(X/k)\simeq\prod_{n\leq -i}\widehat{L\dr Xk*}[-2n].\]
	In particular, the HKR-filtration is exhaustive, if and only if $\widehat{L\dr Xk*}$ is (homologically) bounded above.
\end{theorem}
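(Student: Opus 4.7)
The plan is to promote the equivalence of Theorem~\ref{maintheo} to a filtered equivalence that identifies the HKR filtration on the left with the partial-product filtration $\Fil^i \coloneqq \prod_{n \leq -i}\widehat{L\dr Xk*}[-2n]$ on the right, and then to deduce the exhaustiveness criterion by a direct computation of the resulting colimit.

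First I would compare associated gradeds: both filtrations are decreasing, and in both cases the $i$-th graded piece is $\widehat{L\dr Xk*}[2i]$---by Antieau's construction on one side and by direct inspection of $\Fil^i/\Fil^{i+1}$ on the other. To identify the filtrations themselves, I would revisit the proof of Theorem~\ref{maintheo}: over $\Q$ the rational $\T$-equivariant structure on $\HH(X/k)$ splits the HKR tower on $\HP(X/k)$ term by term, so that the product on the right is genuinely the product of the HKR graded pieces and the HKR filtration corresponds to truncation of the product, i.e.\ to $\Fil^i$. An alternative, more bottom-up, approach would be to check the identification first on smooth affine $X/k$ (where Loday--Quillen gives the splitting and a Kassel-type Hodge decomposition matches the filtrations) and then to extend by naturality, sifted colimits from polynomial algebras, and Zariski descent---both filtrations being preserved by these operations.

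For the exhaustiveness statement I would compute
\[
\colim_{i \to -\infty}\Fil^i \;\simeq\; \colim_{N \to \infty}\prod_{n \leq N}\widehat{L\dr Xk*}[-2n]
\]
whose transition maps are the inclusions of subproducts by extension by zero. Passing to $\pi_k$, this colimit is the subgroup of $\prod_{n \in \Z}\pi_{k+2n}\widehat{L\dr Xk*}$ consisting of tuples supported in $n \leq N$ for some $N$, and it agrees with the full product precisely when, for each $k$, only finitely many $n$ contribute---equivalently, when $\widehat{L\dr Xk*}$ is (homologically) bounded above.

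The main obstacle is the filtered upgrade in the second step: matching graded pieces is automatic, but upgrading the unfiltered equivalence of Theorem~\ref{maintheo} to an equality of filtrations requires either integrating the filtration into the proof of that theorem from the outset---using the rational splitting in a way that remembers the tower structure---or an independent rigidity argument for decreasing filtrations sharing a given associated graded.
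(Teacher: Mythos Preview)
Your exhaustiveness argument is essentially the paper's: once the filtration has been identified with the partial products, the colimit computation you give is exactly the one in the paper's corollary, and the conclusion ``bounded above $\Leftrightarrow$ exhaustive'' follows.

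The gap is in the filtration identification, and you have correctly located it yourself in your last paragraph: you have two filtrations with the same associated graded and you need to match them, but you do not actually say how. Neither of your two suggestions is yet an argument. The first (``the rational $\mathbb{T}$-equivariant structure splits the HKR tower term by term'') is not a statement one can verify without knowing what the HKR tower \emph{is}; the second (``Kassel-type Hodge decomposition matches the filtrations on smooth affines'') names a source but not a mechanism. The missing input is the \emph{definition} of the HKR filtration: it is the Beilinson Whitehead tower of the Tate filtration on $\HP$, left Kan extended from smooth algebras. So the question is not whether some splitting exists over $\Q$, but where each factor of your product sits in the Beilinson $t$-structure on $\Fil(D(k))$.

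The paper's argument does exactly this. For smooth $R/k$ one already has the filtered equivalence $\Fil^\bullet_T\HP(R/k)\simeq\prod_{n}\Fil^{\bullet+n}_H\Omega^*_{R/k}[2n]$ from Corollary~\ref{main filtration}, and then a short degree count shows that the $n$-th factor, with its induced Tate filtration, satisfies $\Fil^m\in D(k)_{\leq n-m}$ and $\gr^m\in D(k)_{\geq n-m}$; these are precisely the conditions for being concentrated in Beilinson degree $n$. Hence the Beilinson Whitehead tower of $\Fil^\bullet_T\HP(R/k)$ is the truncation-by-factors of the product, which is your $\Fil^i$. The reduction to smooth algebras is then legitimate because that is how the HKR filtration is defined. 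Without this Beilinson-$t$-structure step, having the same associated graded does not pin down the filtration---there is no general ``rigidity'' of the sort you allude to.
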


This criterion will give us a large class of examples with exhaustive HKR-filtration. If $k$ is a discrete Noetherian commutative $\Q$-algebra and $X$ an ordinary scheme of finite type over $\Spec\ k$, then Bhatt gives in \cite{Bha12} a concrete way to compute $\widehat{L\dr Xk*}$, which in particular lives in non-positive degrees (cf. Corollary 4.27. \textit{loc.cit.}).

\begin{corollary}\label{Bhatt}
	If $k$ is a discrete Noetherian $\Q$-algebra and $X$ an ordinary finite type scheme over $k$, then the HKR-filtration on $\HP(X/k)$ is exhaustive.
\end{corollary}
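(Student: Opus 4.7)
The plan is to combine Theorem \ref{HKR-filtration} directly with the computation of Bhatt cited in the paragraph preceding the statement. By Theorem \ref{HKR-filtration}, exhaustiveness of the HKR-filtration on $\HP(X/k)$ is equivalent to the Hodge-completed derived de Rham complex $\widehat{L\dr Xk*}$ being homologically bounded above. Hence the corollary reduces to producing this boundedness under the hypotheses that $k$ is a discrete Noetherian $\Q$-algebra and $X$ is of finite type over $\Spec\ k$.

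This is exactly the content of Corollary 4.27 of \cite{Bha12}: under these hypotheses $\widehat{L\dr Xk*}$ lives in non-positive homological degrees, which is strictly stronger than being bounded above. Combining this with Theorem \ref{HKR-filtration} immediately yields the corollary.

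The only nontrivial point is Bhatt's result itself, and it is there that the main obstacle sits. A priori, $\widehat{L\dr Xk*}$ is defined as the inverse limit $\lim_n L\dr Xk*/L\dr Xk{\geq n}$, and the Hodge graded pieces $L\dr Xkn[-n]$ push homology further into negative cohomological directions as $n$ grows, so such an inverse limit can in general acquire $\lim^1$-contributions in arbitrarily high homological degrees. The finite Tor-amplitude of the cotangent complex of a finite type algebra over a Noetherian base is what rules this out, and the verification is already carried out in \textit{loc.cit.} Since the author is working with ordinary schemes finite type over $k$ — so that Bhatt's hypotheses are met verbatim — no further descent or reduction argument is needed on our side.
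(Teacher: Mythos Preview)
Your approach is essentially the paper's: invoke Theorem~\ref{HKR-filtration} to reduce to boundedness of $\widehat{L\dr Xk*}$, then cite Bhatt. Two points are worth flagging.

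First, the paper's actual proof (in section~\ref{main section}) does include a short descent step you dismiss: it observes that $\widehat{L\dr Xk*}=\Gamma(X,\widehat{L\dr -k*})$ is a limit over affines, applies Bhatt's affine result (Proposition~4.10 rather than Corollary~4.27) to get coconnectivity there, and then notes that limits preserve coconnectivity. Your claim that ``no further descent or reduction argument is needed'' is slightly optimistic; Bhatt's statements are for finite-type \emph{algebras}, so the passage to schemes does require this (admittedly trivial) step.

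Second, your explanatory aside is not right: the cotangent complex of a general finite-type algebra over a Noetherian base does \emph{not} have finite Tor-amplitude---that is an lci condition. Bhatt's coconnectivity result goes instead through the identification of the Hodge-completed derived de Rham complex with Hartshorne's algebraic de Rham cohomology via formal completion in a smooth embedding. This does not affect the validity of your proof (you are only citing the result), but the heuristic you offer for why it holds is misleading.
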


Furthermore, the analysis of the Tate-filtration in characteristic 0, which is reviewed in the Appendix \ref{Tatefiltration}, also gives a description of the multiplicativity of the equivalence in the Theorem \ref{maintheo}.  In general for an algebra $A\in\CAlg_k$ there is just no algebra structure on $\prod_{n\in\Z}A[-2n]$, however, for a (animated) commutative $k$-algebra $R$, the object $\HP(R/k)$ carries a natural structure of a commutative algebra in $\Mod_k$. In section \ref{mal} we construct the corresponding multiplicative structure on $\prod_{n\in\Z}\widehat{L\dr Xk*}[-2n]$. On homotopy groups the induced graded ring structure comes from $L\dr Xk{\leq n}\llrrpar t$. Note that there is the terminal topology on $\pi_*\widehat{L\dr Xk{*}}$ making the maps $\pi_*\widehat{L\dr Xk{*}}\to\pi_*L\dr Xk{\leq n}$ continuous. It is not Hausdorff because not every element is detected in some $\pi_* L\dr Rk{\leq n }$. With this we can almost completely describe the graded ring $\pi_*\HP(X/k)$ in terms of $\pi_*\widehat{L\dr Xk{*}}$:
\begin{theorem}\label{algebrastructure}
	In the situation of Theorem \ref{maintheo}, we can describe the homotopy groups $\HP_*(X/k)$ algebraically as
	\[\HP_*(X/k)\cong\left\{\sum_{n\in\Z}a_nt^n:a_n\in\pi_{*+2n}\widehat{L\dr Xk*}\right\}\]
	with addition and multiplication given as
	\begin{align*}
	\left(\sum_{n\in\Z}a_nt^n\right)+\left(\sum_{n\in\Z}b_nt^n\right)&=\sum_{n\in\Z}(a_n+b_n)t^n\\
	\left(\sum_{n\in\Z}a_nt^n\right)\cdot\left(\sum_{n\in\Z}b_nt^n\right)&=\sum_{n\in\Z}c_nt^n
	\end{align*}
	where $c_n$ is a limit of the finite partial sums of $\sum_{i+j=n}a_i\cdot b_j$ in the topology on $\pi_{*}\widehat{L\dr Rk{*}}$\footnote{This is sometimes called a net and explicitly means for every open $U\ni 0$, there is a finite subset $I_0\subset\{i+j=n\}$, such that for all finite subset $J\subset\{i+j=n\}$ containing $I_0$
	we have $c_n-\sum_{(i,j)\in J}a_i\cdot b_j\in U.$}
\end{theorem}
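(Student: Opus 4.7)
The plan is to deduce this from Theorem \ref{maintheo} combined with the multiplicative refinement constructed in Section \ref{mal}, and then to translate the ring structure at each finite Hodge truncation into the net-convergence description.

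The additive statement is immediate: applying $\pi_*$ to the equivalence of Theorem \ref{maintheo} and using that homotopy groups commute with products yields
\[\pi_*\HP(X/k)\;\cong\;\prod_{n\in\Z}\pi_{*+2n}\widehat{L\dr Xk*},\]
which I identify with the stated formal series by labelling the $n$-th factor as $a_n t^n$ with $a_n\in\pi_{*+2n}\widehat{L\dr Xk*}$. Addition is then visibly componentwise.

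For the ring structure I invoke the multiplicative refinement of Section \ref{mal}: for each $m\geq 0$ there is a commutative square of $\Einfty$-$k$-algebras
\[\begin{tikzcd}
\HP(X/k) \ar[r,"\sim"] \ar[d] & \prod_{n\in\Z}\widehat{L\dr Xk*}[-2n] \ar[d] \\
L\dr Xk{\leq m}\llrrpar{t} \ar[r,"\sim"] & \prod_{n\in\Z}L\dr Xk{\leq m}[-2n]
\end{tikzcd}\]
whose top row is the equivalence of Theorem \ref{maintheo}, whose left vertical is induced by the projection to the $m$-th Hodge truncation, and whose bottom identifies the generator $t$ with the degree shift; in particular the multiplication on the right-hand side becomes the honest Laurent-series convolution after truncating. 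For $\alpha=\sum_i a_it^i$ and $\beta=\sum_j b_jt^j$ in $\pi_*\HP(X/k)$, the $n$-th coefficient $c_n$ of the product is uniquely determined by its images in $\pi_{*+2n}L\dr Xk{\leq m}$ as $m$ varies, since $\widehat{L\dr Xk*}=\lim_m L\dr Xk{\leq m}$.

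The crucial boundedness is that $L\dr Xk{\leq m}$ sits in cohomological degrees $\leq m$, since its Hodge graded pieces are the derived exterior powers of the cotangent complex shifted by $-p$ with $p\leq m$, and these lie in cohomological degrees $\leq p\leq m$ by connectivity of the cotangent complex. Hence for fixed total homological degree the images $a_i^{(m)}\in\pi_{*+2i}L\dr Xk{\leq m}$ vanish once $i$ is sufficiently negative, and similarly for $b_j^{(m)}$. Imposing $i+j=n$ then confines $i$ to a finite range, so $\sum_{i+j=n}a_i^{(m)}b_j^{(m)}$ is a \emph{finite} sum that computes the image of $c_n$ in $\pi_{*+2n}L\dr Xk{\leq m}$ by the diagram. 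Since the open neighbourhoods of $0$ in $\pi_{*+2n}\widehat{L\dr Xk*}$ are by definition the kernels of the maps to the truncations, taking $I_0$ to consist of those pairs $(i,j)$ with $a_i^{(m)}b_j^{(m)}\neq 0$ makes $c_n-\sum_{(i,j)\in J}a_ib_j$ map to zero in $\pi_{*+2n}L\dr Xk{\leq m}$ for every finite $J\supset I_0$, which is exactly the stated net convergence. The main obstacle is really the construction of the multiplicative diagram above — the content of Section \ref{mal} together with the Tate-filtration analysis of Appendix \ref{Tatefiltration} — after which the remaining argument is the boundedness-driven net-convergence just described.
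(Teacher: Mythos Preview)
Your argument is correct and follows essentially the same route as the paper: invoke the multiplicative equivalence $\HP(X/k)\simeq\lim_m L\dr Xk{\leq m}\llrrpar t$ from Section~\ref{mal}, use the connectivity bound on $L\dr Xk{\leq m}$ to see that only finitely many terms of the convolution survive at each truncation, and read off net convergence. One caveat: your claim that $c_n$ is ``uniquely determined by its images in $\pi_{*+2n}L\dr Xk{\leq m}$'' is false in general---the map $\pi_*\widehat{L\dr Xk*}\to\lim_m\pi_*L\dr Xk{\leq m}$ has kernel $\lim^1\pi_{*+1}L\dr Xk{\leq m}$, and the paper explicitly warns that the topology is not Hausdorff---but this does not affect your proof, since the theorem only asserts that $c_n$ is \emph{a} limit of the net, not that it is determined by it.
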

However, we want to immediately issue the warning that because the topology on $\pi_*\widehat{L\dr Rk*}$ is not Hausdorff, the element $c_n\in\pi_*\widehat{L\dr Rk*}$ is not uniquely determined as a limit. To fully understand the homotopy groups $\HP_*(X/k)$ algebraically, one, furthermore, has to analyze the $\lim^1$-terms contributing to $\pi_*\widehat{L\dr Rk*}$.

\subsection{Outline}

We begin in section \ref{formal section} with a formality statement for $S^1$-actions in the derived category over rational algebras (Corollary \ref{formalaction}) in order to recall a coherent version of the HKR-theorem in Proposition \ref{HKR}. This allows us to coherently compute $\HP$ for smooth algebras.

In section \ref{main section} we will use the language of filtrations in order to generalize the computations for smooth algebras to arbitrary derived schemes and prove Theorem \ref{maintheo} (cf. Theorem \ref{main theorem}). In particular we will use the multiplicativity of the Tate-filtration. The Tate-filtration itself and its multiplicative structure in the rational setting will be reviewed in the Appendix \ref{Tatefiltration}. Furthermore in section \ref{main section} we will exploit the consequences for the HKR-filtration and prove Theorem \ref{HKR-filtration} and Corollary \ref{Bhatt}. Finally, the last section (\ref{mal}) is completely devoted to the proof of Theorem \ref{algebrastructure}.

\subsection{Notation}

Throughout this note we are freely using the $\infty$-categorical language as developed in \cite{Lur09} and \cite{Lur16}. In particular, for a commutative ring $k$ we identify the derived category $D(k)$ with the category $\Mod_k:=\Mod_{\H k}{\Sp}$ of $\H k$-module spectra and thus view it as a stably symmetric monoidal $\infty$-category. It comes with a canonical lax symmetric monoidal functor $\iota\colon \Ch_*(k)\to D(k)$ from the 1-category of chain complexes and we will constantly abuse notation by identifying $C_*$ with $\iota C_*$ for $C_*\in\Ch_*(k)$.

Moreover, we will use the 1-category $\CDGA_k$ of commutative differential graded algebras over $k$. An object $(C_*,d)\in \CDGA_k$ consists of a commutative graded $k$-algebra $\bigoplus_{i\in\Z} C_i$ of discrete $R$-modules with differentials $d\colon C_{i-1}\to C_{i}$ for all $i>0$ satisfying the Leibniz rule. There will be two orthogonal ways to view a $\CDGA_k$ as an object in $\CAlg_k$, either with 0 differential or with differential $d$ and we already warn the reader to not confuse those functors.

In particular, for a commutative ring $k$ and a commutative $k$-algebra $R$, we will generally view the de Rham complex $\dr Rk*$ as an object in $\CAlg_k:=\CAlg(\Mod_k)$, and if we want to view it as a CDGA over $k$ we write $\dr RkH$.

Later in the paper, we need to talk about filtrations in a stable category $\C$, by which we always mean decreasingly indexed, $\Z$-graded filtrations, i.e. functors from $\Z_\leq^\op$ into $\C$. For a symmetric monoidal category $\C$ we equip the category $\Fil(\C):=\Fun(\Z_\leq^\op,\C)$ with the Day convolution tensor product $\otimes^\Day$. The $n$-th associated graded $\gr^n F$ of $F$ is given by the cofibre of the map $F^{n+1}\to F^{n}$. A splitting of a filtration $F^\bullet\in\Fil(\C)$ consists of a collection $(A_n)_{n\in\Z}$ together with an map of filtrations $\bigoplus_{n\geq\bullet}A_n\to F^\bullet$ inducing an equivalence on associated graded. In particular, a splitting $(A_n)$ of $F$ canonical gives an identification $\gr^n F\simeq A_n$.
Finally, to fix vocabulary, a filtration $F^\bullet\in\Fil(\C)$ on $F\in\C$ is complete if $\lim F^\bullet\simeq 0$ and is exhaustive if $\colim F^\bullet\simeq F$. We write $\Fil^\wedge(\C)\subset \Fil(\C)$ for the full subcategory on complete filtrations and denote by $(-)^\wedge$ its left adjoint.

\subsection{Acknowledgment}
I would like to thank Achim Krause, Jonas McCandless and Thomas Nikolaus for helpful discussions on this topic. I also want to thank Zhouhang \textsc{Mao} for making remarks on an earlier version. Finally, again I want to thank Thomas Nikolaus for bringing this project up. Funded by the Deutsche Forschungsgemeinschaft (DFG, German Research Foundation) under Germany’s Excellence Strategy EXC
2044–390685587, Mathematics Münster: Dynamics–Geometry–Structure and the CRC 1442 Geometry: Deformations and Rigidity.

\section{Formality over \texorpdfstring{$\Q$}{Q}}\label{formal section}

The explicit computations heavily rely on strong formality properties that hold if working over $\Q$-algebras. In this section we will prove a strong version of the HKR-theorem for Hochschild homology. This enables us to establish a coherent versions of the Theorem \ref{LQ} copied from \cite{LQ84}. These results are not new and can be found in \cite{TV11} and \cite{MRT22}, but we would like to present them here.

Throughout the first section, let $k$ be a discrete commutative $\Q$-algebra.
The key ingredient is a formality statement of $C_*(S^1,k)$, due to \cite{TV11}.

\begin{construction}\label{A}
	The multiplication $S^1\times S^1\to S^1$ and the diagonal $S^1\to S^1\times S^1$ exhibit $S^1$ as an associative bialgebra in spaces. Because the symmetric monoidal structure on $\Spc$ is cartesian, by the dual of \cite{Lur15}[Proposition 2.4.3.8] the coalgebra structure given by the diagonal refines to a cocommutative coalgebra structure. Now the functor $C_*(-,k)\colon \Spc\to D(k)$ from spaces to the derived category of $k$ taking singular chains with coefficients in $k$ refines via the Eilenberg-Zilber maps to a symmetric monoidal functor. Therefore $C_*(S^1,k)$ acquires the structure of a cocommutative bialgebra in $D(k)$.
	
	Moreover, the functor $\iota\colon\Ch_*(k)\to D(k)$ from the 1-category of chain complexes to the $\infty$-category $D(k)$ is lax symmetric monoidal and precisely restricts to a symmetric monoidal functor on the full 1-subcategory $\Ch_*^{K-\mathrm{flat}}(k)$ of K-flat chain complexes. Thus the chain complex for $\epsilon$ in degree 1
	\[\Lambda_k(\epsilon):=(k\cdot \epsilon\xrightarrow 0 k\cdot 1)\]
	with multiplication $\epsilon^2=0$ and primitive comultiplication $\Delta \epsilon=(\epsilon\otimes 1+1\otimes\epsilon)$ gives a cocommutative bialgebra object in $D(k)$ under the identification of $\Lambda_k(\epsilon)$ as an element in $D(k)$.
\end{construction}

\begin{proposition}[\cite{TV11}]\label{formal}
	In this setting where $k$ is a discrete $\Q$-algebra, there is a natural equivalence $C_*(S^1,k)\simeq \Lambda_k(\epsilon)$  as cocommutative bialgebras in $D(k)$ for $\epsilon$ primitive in degree 1.
\end{proposition}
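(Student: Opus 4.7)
The plan is to first establish the underlying equivalence in $D(k)$ and then promote it to a cocommutative bialgebra equivalence by invoking rational Lie-theoretic methods. At the level of underlying objects the equivalence is immediate: $H_0(S^1,k)=k$ and $H_1(S^1,k)=k$ are free of rank one and all other homology vanishes, so $C_*(S^1,k)$ splits in $D(k)$ as $k\oplus k[1]\simeq \Lambda_k(\epsilon)$; this step does not even use the hypothesis $\Q\subset k$.

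To promote this to a bialgebra equivalence I would invoke a derived Milnor--Moore theorem, which is specific to characteristic zero: over a $\Q$-algebra $k$, augmented connected cocommutative Hopf algebras in $D(k)$ are equivalent to differential graded Lie algebras over $k$ via $\mathrm{Prim}$ and $U$. Under this equivalence $\Lambda_k(\epsilon)$ is exactly $U(k\cdot\epsilon)$, where $k\cdot\epsilon$ is the one-dimensional Lie algebra in degree one with zero bracket (the bracket vanishes automatically because $[\epsilon,\epsilon]$ must land in $H_2=0$, and the symmetric algebra on a degree-one generator is exterior by Koszul signs).

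It therefore suffices to identify the Lie algebras on both sides. By computing primitives (cotensoring with $k$ over the Hopf algebra), only the fundamental class $\epsilon\in H_1(S^1,k)$ survives in nonzero degree, so both sides correspond to the same one-dimensional Lie algebra in degree one with zero bracket. Applying $U$ gives the required equivalence of cocommutative bialgebras, and the naturality statement follows from the naturality of the Milnor--Moore adjunction.

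The main obstacle I would expect to need to handle carefully is setting up the derived Milnor--Moore equivalence with enough coherence to transport the full bialgebra structure (not merely the coalgebra or algebra data separately). This is essentially the content of \cite{TV11}. A more hands-on alternative would be to choose an explicit strict CDG-Hopf algebra model for $C_*(S^1,k)$, for instance via the bar construction on the group algebra $k[\Z]$, and construct an explicit zig-zag of quasi-isomorphisms of CDG-Hopf algebras to $\Lambda_k(\epsilon)$; over $\Q$ the potential higher corrections from the shuffle product can be arranged to vanish for a sufficiently small model by degree reasons, in contrast with the positive characteristic case where they would give rise to divided power corrections.
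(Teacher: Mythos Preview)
Your approach via a derived Milnor--Moore equivalence is sound in outline and genuinely different from the paper's argument. The paper does not pass through Lie algebras at all; instead it uses the bar/cobar adjunction
\[
\bar\colon \Alg^{\aug}(\co\CAlg D(k)) \rightleftarrows \co\CAlg^{\aug}(D(k)) :\!\co\bar
\]
and the fact (due to Adams) that the unit $C_*(S^1,k)\to \co\bar(\bar C_*(S^1,k))$ is an equivalence. This reduces the problem to identifying the augmented cocommutative coalgebra $\bar C_*(S^1,k)\simeq C_*(BS^1,k)$, or dually the commutative algebra $C^*(BS^1,k)$. The rational hypothesis enters here: a choice of generator in $H^2(BS^1,k)$ exhibits $C^*(BS^1,k)\simeq k[x]$ as the \emph{free} commutative $k$-algebra on a class in degree $-2$, and then an explicit Koszul resolution computes $\co\bar$ of its dual as $\Lambda_k(\epsilon)$.

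Your route and the paper's are Koszul dual to one another: you reduce to the primitives (a Lie algebra concentrated in one degree), the paper reduces to the bar construction (a polynomial algebra concentrated in one degree). What the paper's approach buys is that the required coherence is packaged in the bar/cobar adjunction for which there is a direct reference (\cite{Lur16} Theorem 5.2.2.17, with the caveat noted in the paper), whereas your approach, as you yourself flag, leans on a derived Milnor--Moore statement whose full $\infty$-categorical setup is the substantive content you would have to import. Your alternative sketch via an explicit CDG-Hopf zig-zag is also viable but would need more care than indicated to be a complete argument.
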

For completeness reasons we would like to include a proof here:

\begin{proof}
	Both objects $C_*(S^1,k)$ and $\Lambda_k(\epsilon)$ have canonical augmentations coming from $S^1\to *$ in $\Spc$ and $\epsilon\mapsto 0$ in $\Ch_*(k)$. We will in fact show, that they even agree as augmented cocommutative algebras in $D(k)$.
	Using the adjunction (e.g. cf. \cite{Lur16} Theorem 5.2.2.17\footnote{There is a gap in the proof of the cited reference as pointed out by \cite{DH22}, which could be fixed in the latest version (v4) of \cite{BCN23}.})
	\[\begin{tikzcd}\bar\colon\Alg^\aug(\co\CAlg D(k))\ar[r, shift left = 1]&\co\CAlg^\aug (D(k)):\!\co\bar\ar[l, shift left=1]\end{tikzcd}\] 
	 it satisfies to construct a map of (co-)augmented cocommutative coalgebras under the bar-functor. In fact the computation in \cite{Ada56} show that for $C_*(S^1,k)$ the unit of the adjunction $C_*(S^1,k)\to\co\bar(\bar C_*(S^1,k))$ is an equivalence, so that an identification of $\bar C_*(S^1,k)\simeq C_*(BS^1,k)$ translates to an identification of $C_*(S^1,k)$ under $\co\bar$. Therefore, we want to understand the cocommutative coalgebra structure of $\bar C_*(BS^1,k)$ or equivalently the dual commutative algebra structure on $C^*(BS^1,k)$, as both objects are of finite type. 
	A choice of a generator in $H^2(BS^1,k)$ gives a map $k[x]:=\Free(k[-2])\to C^*(BS^1,k)$ from the free commutative $k$-algebra on a generator $x$ in degree $-2$. Because $\Q\subset k$, on homotopy groups both sides are free on a generator in degree $-2$ and we have $C^*(BS^1,k)\simeq k[x]$ is free as a commutative algebra. Finally, translating back to cocommutative bialgebras, we can compute
	\[\co\bar (k[x])^\vee\simeq(\bar k[x])^\vee\simeq \left(k\otimes_{k[x]} k\right)^\vee\]
	by resolving $k$ with the DGA $(\Lambda_{k[x]}(\epsilon^\vee), d\epsilon^\vee=x)$ for a primitive element $\epsilon^\vee$ in degree $-1$. Thus $\left(k\otimes_{k[x]} k\right)^\vee\simeq\Lambda_k(\epsilon^\vee)^\vee\simeq\Lambda_k(\epsilon)$ for $\epsilon$ a dual basis to $\epsilon^\vee$.
\end{proof}

From now on, to shorten notation we set $A:=\Lambda_k(\epsilon)$ for $|\epsilon|=1$ primitive.

\begin{corollary}\label{formalaction}
	For a rational discrete algebra $k$ the categories $\Fun(BS^1,D(k))$ and $\Mod_{A}D(k)$ are equivalent as symmetric monoidal categories, where the symmetric monoidal structure on the latter comes from the coalgebra structure on $A$.
\end{corollary}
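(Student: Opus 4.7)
The plan is to factor the equivalence through the standard identification of $\Fun(BS^1, D(k))$ with modules over the group algebra of $S^1$, and then apply the formality result Proposition \ref{formal}.

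First I would recall that for a presentable stable symmetric monoidal $k$-linear $\infty$-category $\D$, functors out of $BG$ for an associative group object $G$ in spaces correspond to modules over the group algebra. Applied to our situation, this yields a $k$-linear equivalence
\[\Fun(BS^1, D(k)) \simeq \Mod_{C_*(S^1,k)}(D(k)),\]
where $C_*(S^1, k)$ carries its associative algebra structure inherited from the group structure on $S^1$.

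Next I would upgrade this to a symmetric monoidal equivalence. The pointwise symmetric monoidal structure on $\Fun(BS^1, D(k))$ arises from the cocommutative coalgebra structure on $S^1 \in \Spc$ given by the diagonal $\Delta\colon S^1 \to S^1 \times S^1$, since the tensor product of two $S^1$-equivariant objects uses the diagonal $S^1$-action. Under the equivalence above, this corresponds to the symmetric monoidal structure on $\Mod_{C_*(S^1,k)}(D(k))$ induced by the cocommutative coalgebra structure on $C_*(S^1, k)$ produced in Construction \ref{A}, because $\Mod_{(-)}(D(k))$ for a cocommutative bialgebra is given by tensoring over $k$ with the action pulled back along $\Delta$. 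Now Proposition \ref{formal} provides an equivalence $C_*(S^1,k) \simeq A$ of cocommutative bialgebras in $D(k)$, and the functoriality of the symmetric monoidal category $\Mod_{(-)}(D(k))$ in cocommutative bialgebra maps yields a symmetric monoidal equivalence
\[\Mod_{C_*(S^1,k)}(D(k)) \simeq \Mod_A(D(k)).\]
Composing the two equivalences gives the claim.

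The main obstacle is the careful matching of the symmetric monoidal structures in the second step: one must track how the diagonal on $S^1$ transfers through $C_*(-,k)$ to the coalgebra structure on the group algebra, and then identify the induced tensor product on modules with the pointwise tensor product of $S^1$-objects. Once this identification is established---using that $C_*(-,k)$ is symmetric monoidal on K-flat complexes as recalled in Construction \ref{A}, and that the tensor product on modules over a cocommutative bialgebra is entirely determined by the underlying cocommutative coalgebra structure---the formality result of Proposition \ref{formal} closes the argument, since it is stated precisely at the level of cocommutative bialgebras in $D(k)$.
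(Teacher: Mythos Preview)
Your proposal is correct and follows essentially the same approach as the paper: first identify $\Fun(BS^1,D(k))$ symmetric monoidally with $\Mod_{C_*(S^1,k)}D(k)$ using the cocommutative bialgebra structure on $C_*(S^1,k)$, then apply Proposition~\ref{formal} to replace $C_*(S^1,k)$ by $A$. The paper's proof is terser, citing \cite{Rak20} Proposition~2.2.1 for the matching of symmetric monoidal structures that you spell out directly.
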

\begin{proof}
	There is a symmetric monoidal equivalence $\Fun(BS^1,D(k))\simeq \Mod_{C_*(S^1,k)}D(k)$ as symmetric monoidal categories, where the symmetric monoidal structure on the right hand side comes from the cocommutative bialgebra structure on $C_*(S^1,k)$. Thus the equivalence $C_*(S^1,k)\simeq A$ as cocommutative bialgebras gives a symmetric monoidal equivalence of their module categories (c.f. Proposition 2.2.1. in \cite{Rak20}).
\end{proof}

\begin{remark}
	The above equivalence induces the identity on underlying objects. Thus, given a complex $X\in D(k)$ equipping $X$ with an action of $S^1$ is equivalent to providing a module structure over $A$. Informally, this amounts to a map $d\colon k\cdot \epsilon[1]\otimes X\simeq X[1]\to X$ and coherent homotopies witnessing $d^2\simeq 0$.  
\end{remark}

\begin{construction}
	Let $\CDGA_k$ denote the 1-category of commutative differential graded algebras over $k$ as introduced in the Notations. Forgetting the differential, there is a functor $\CDGA_k\to\CAlg\Ch_*(k)$ sending $(C_*,d)\in \CDGA$ to $\bigoplus_{i\in\Z} C^i[i]\in\CAlg\Ch_*(k)$ with 0 differential. In 1-categories now an action of $A$ precisely corresponds to an ascending differential, such that this functor refines through $\CAlg\Mod_A\Ch_*(k)$ and postcomposing with $\iota$ we get a map
	\[\CDGA_k\to\CAlg\Mod_A\Ch_*(k)\to\CAlg\Mod_A D(k).\]
	To avoid confusion we will write $U\colon\CDGA_k\to\CAlg_k^{BS^1}$ for this functor.\label{deRham}
\end{construction}
	
\begin{remark}	
	For a $k$-algebra $R$ the de Rham complex $\dr RkH$ by definition lives in $\CDGA_k$. Now via the previous construction the underlying chain complex 
	\begin{equation}\label{derham}U\dr RkH\simeq\prod_{n\in\N}\dr Rkn[n]\simeq\left(\cdots\xrightarrow 0\dr Rk2\xrightarrow 0 \dr Rk1\xrightarrow 0 \dr Rk0\right)\end{equation}
	gives an object in $\CAlg_k^{BS^1}$.
\end{remark}

This simplifies the analysis of Hochschild homology in the rational setting and we can phrase a strong version of the HKR-theorem, which has been well known (e.g. \cite{Qui70}). However, we would like to emphasize on all the structure the following result captures and give a different proof as in the cited source:

\begin{proposition}\label{HKR}
	If $\Q\subset k$, then for every smooth discrete $k$-algebra $R$, there are natural equivalences
	\[\HH(R/k)\xrightarrow{\sim}U\dr RkH\simeq\prod_{n\in\N}\dr Rkn[n]\]
	of commutative algebras in $D(k)$ with $S^1$-action, where the $S^1$-action on the right hand side is given by the de Rham differential (cf. Construction \ref{deRham}).
\end{proposition}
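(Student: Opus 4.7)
The plan is to exploit the formality equivalence $\CAlg_k^{BS^1}\simeq\CAlg(\Mod_A D(k))$ of Corollary \ref{formalaction}, with $A=\Lambda_k(\epsilon)$, in order to reduce the problem to constructing an $R$-linear derivation; then to use the universal property of the K\"ahler differentials (valid because $R$ is smooth) to upgrade it multiplicatively to a comparison map $U\dr RkH\to\HH(R/k)$; and finally to verify this map is an equivalence by reducing to classical HKR on homotopy groups.

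Concretely, I would first take the unit $\eta\colon R\to\HH(R/k)$ in $\CAlg_k$ and compose with the action of the primitive element $\epsilon\in A$ to obtain a map $D\colon R\to\HH(R/k)[1]$. Primitivity of $\epsilon$ together with the commutative algebra structure forces $D$ to be a homotopy-coherent derivation. Using that $R$ is smooth, so that $L_{R/k}\simeq\dr Rk1$, I would factor $D$ through an $R$-linear map $\dr Rk1[1]\to\HH(R/k)$. Next I would extend multiplicatively via the $E_\infty$-$R$-algebra structure on $\HH(R/k)$ to a map out of the free $E_\infty$-$R$-algebra on $\dr Rk1[1]$; because $\Q\subset k$ and $\dr Rk1$ is finite projective over $R$, this free algebra identifies with $\bigoplus_{n\geq 0}\Lambda^n_R\dr Rk1[n]$, whose underlying $E_\infty$-algebra is that of $U\dr RkH$ by \eqref{derham}. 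The resulting map is automatically $S^1$-equivariant, since the $A$-action on $U\dr RkH$ arising from Construction \ref{deRham} is governed by the de Rham differential $R\to\dr Rk1$, which matches $D$ by construction.

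To verify the map is an equivalence, I would pass to homotopy groups. The right hand side has $\pi_n U\dr RkH\cong\dr Rkn$ by inspection of the underlying chain complex, and the same holds for $\pi_n\HH(R/k)$ by the classical HKR isomorphism; tracing the construction, the map we built implements exactly the standard HKR identification, sending a K\"ahler differential $da$ to the Connes operator applied to $a\in\HH_0(R/k)$.

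The main obstacle I expect is the careful $\infty$-categorical formulation of the universal property needed to extend $D$ to a morphism in $\CAlg_k^{BS^1}$: one has to characterise $U\dr RkH$, for $R$ smooth, as representing the functor on $\CAlg_k^{BS^1}$ sending $B$ to the space of pairs consisting of a map $R\to B$ in $\CAlg_k$ together with a compatible degree-1 derivation. Formality is essential here, since it turns $S^1$-actions into modules over the strict DGA $A$, and so makes the strict CDGA $\dr RkH$ into a clean representative of an $S^1$-equivariant $E_\infty$-algebra amenable to the extension argument.
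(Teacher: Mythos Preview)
Your approach is genuinely different from the paper's, and the difference is instructive. You build a map $U\dr RkH\to\HH(R/k)$ by extending an $R$-module map $\dr Rk1[1]\to\HH(R/k)$ over the free $E_\infty$-$R$-algebra; the paper instead builds the map in the opposite direction, $\HH(R/k)\to U\dr RkH$, using the universal property of Hochschild homology in $\CAlg_k^{BS^1}$: since $\HH(R/k)\simeq R\otimes S^1$ is the tensoring of $R$ by the circle, an $S^1$-equivariant $E_\infty$-map out of it to any $B\in\CAlg_k^{BS^1}$ is the same datum as a \emph{non}-equivariant algebra map $R\to B$. Taking $B=U\dr RkH$ and the inclusion $R=\dr Rk0\hookrightarrow U\dr RkH$ produces the comparison map with equivariance built in. The paper then checks it is an equivalence on homotopy groups by composing with the classical HKR isomorphism $\dr RkH\xrightarrow{\sim}\HH_*(R/k)$ and invoking the $1$-categorical universal property of $\dr RkH$ among CDGAs under $R$ to see the composite $\dr RkH\to\HH_*(R/k)\to\dr RkH$ is the identity.

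The obstacle you flag is exactly the point where the paper's route is cleaner. Your claim that the extended map is ``automatically $S^1$-equivariant'' because the $\epsilon$-actions match on generators does not settle the higher coherences; you then correctly observe that what is really needed is a universal property for $U\dr RkH$ in $\CAlg_k^{BS^1}$. But notice that the functor you propose it should represent, $B\mapsto\{\text{maps }R\to B\text{ in }\CAlg_k\text{ with a compatible derivation}\}$, is already represented by $\HH(R/k)$: once $B$ carries an $S^1$-action the derivation is determined, so the ``pair'' collapses to $\Map_{\CAlg_k}(R,B)$. In other words, carrying your strategy to completion forces you back to the paper's universal property of $\HH$, only approached from the harder side. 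The paper's argument is therefore not just an alternative but essentially the minimal realization of what your plan requires.
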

\begin{proof}
	In the category $\CAlg_k^{BS^1}$ Hochschild homology enjoys a universal property: For every commutative $k$-algebra $S$ with $S^1$-action any non-equivariant map $R\to S$ extends uniquely up to contractible choice over $R\to\HH(R/k)$. Thus we get the dashed $S^1$-equivariant algebra map
	\[\begin{tikzcd}
	R\simeq\dr Rk0\ar[r]\ar[d]&\prod_{n\in\N}\dr Rkn[n]\\
	\HH(R/k)\ar[ru, dashed]
	\end{tikzcd}\]
	 The original computation in the HKR-theorem \cite{HKR62} gives an equivalence $\dr RkH\xrightarrow{\sim}\HH_*(R/k)$ of differentially graded algebras. Postcomposing with the map above on homotopy groups, we get a map $\dr RkH\to\HH_*(R/k)\to\dr RkH$. Finally, $\dr RkH$ has a universal property among commutative differentially graded algebras, as the initial CDGA with a map from $R$ into its zeroth part. Because on the zeroth part the composition above is given by the identity $R\to R$, the same is true for the entire map, forcing $\HH_*(R)\to\dr RkH$ to be an equivalence.
\end{proof}

\begin{remark}
	This strong version of the HKR-theorem can be understood as a rigidification of the Hochschild homology functor from polynomial $k$-algebras $\Poly_k$: It gives a functorial factorization 
	\[\begin{tikzcd}
	& \CDGA_k\ar[d,"U"]\\
	\Poly_k\ar[ru, "\dr -kH"]\ar[r,swap, "\HH(-/k)"]&\CAlg\Mod_{A} D(k),
	\end{tikzcd}
	\]
	through the functor $\dr -kH\colon\Poly_k\to\CDGA_k$ of 1-categories.
\end{remark}

We can now get a very good understanding of the Tate-construction for such formal objects:

\begin{definition}
	For $(C_*,d)\in\CDGA_k$ we write $|C_*|$ for the chain algebra $(C_{-*},d)$. This gives a functor $|-|\colon\CDGA_k\to\CAlg\Ch_*(k)$ of 1-categories. More generally, given a graded object $C_*$ with differential $d$ we want to write $|C_*|$ to stress that we view it as a chain complex.
\end{definition}

\begin{example}
	By definition we have $|\dr RkH|\simeq\dr Rk*$.
\end{example}

With this notation set, we can make the classical computations of periodic cyclic homology in characteristic zero. This is also done for example in the lectures \cite{KN18}.

\begin{lemma}\label{computation}
	For $(C_*,d)\in\CDGA_k$ there is a natural map $|C_*|\to (UC_*)^{tS^1}$ in $\CAlg_k$.
\end{lemma}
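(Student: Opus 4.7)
The plan is to exploit the formality over $\Q$ (Corollary \ref{formalaction}) to give an explicit model of $(UC_*)^{tS^1}$ as a Laurent series complex in a degree $-2$ variable $t$, within which $|C_*|$ appears naturally as a diagonal sub-CDGA.

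First, under the symmetric monoidal equivalence $\Fun(BS^1, D(k)) \simeq \Mod_A D(k)$, the object $UC_*$ has underlying $k$-module $\bigoplus_m C_m[m]$ with $\epsilon$-action given by $d\colon C_m \to C_{m+1}$. The first step is to compute $(UC_*)^{hS^1} \simeq \Map_A(k, UC_*)$ using the Koszul resolution $P_\bullet = (A[x], \partial x = \epsilon) \xrightarrow{\simeq} k$, where $A[x]$ denotes the free graded-commutative $A$-algebra on a generator $x$ of homological degree $2$; acyclicity over $\Q$ is witnessed by the divided power contracting homotopy. Because $A[x]$ is free as an $A$-module on the monomials $x^n$, this yields
\[ (UC_*)^{hS^1} \simeq \bigl( UC_* \llrrbra{t},\, t \cdot d \bigr), \qquad (UC_*)^{tS^1} \simeq \bigl( UC_* \llrrpar{t},\, t \cdot d \bigr), \]
where $t$ is the class of degree $-2$ dual to $x$ and the differential $t\cdot d$ arises from the $\epsilon$-action on $UC_*$.

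Next, inside $UC_* \llrrpar{t} \simeq \bigoplus_m C_m[m] \llrrpar{t}$, we extract the diagonal subobject $\bigoplus_n C_n \cdot t^n$ by placing $C_n$ in the $t^n$-slot for each $n \in \Z$; this sits in chain degree $n - 2n = -n$, matching $|C_*|$ degreewise. A direct calculation shows that the twisted differential $t \cdot d$ preserves this subobject, sending $C_n \cdot t^n$ to $C_{n+1} \cdot t^{n+1}$ via $d$, so that the induced differential coincides with the chain differential of $|C_*|$; similarly the Cauchy product of Laurent series restricts to the CDGA multiplication $C_i \otimes C_j \to C_{i+j}$ on the diagonal. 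This exhibits $|C_*|$ as a natural sub-commutative algebra of $(UC_*)^{tS^1}$, providing the required map.

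The principal difficulty is checking that the commutative algebra structure on $(UC_*)^{tS^1}$ coming from the lax symmetric monoidal structure of $(-)^{tS^1}$ really agrees with the naive Cauchy product on the model $UC_* \llrrpar{t}$ above. I expect this to be handled by promoting $A[x]$ to a resolution in the $\infty$-category of commutative $A$-algebras (using the divided-power versus ordinary Koszul comparison, which is an equivalence of $\Einfty$-$A$-algebras over $\Q$), so that $\Map_A(A[x], -)$ acquires the desired lax symmetric monoidality and the multiplication on the model is identified with the Cauchy product. Naturality in $C_* \in \CDGA_k$ is then automatic from the functoriality of $U$ and of the resolution.
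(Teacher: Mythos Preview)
Your core argument---compute $(UC_*)^{hS^1}$ via the Koszul resolution of $k$ over $A$ and embed $|C_*|$ along the diagonal $C_i\mapsto C_i\cdot t^i$---is exactly the paper's. The gap is in your last paragraph, where the proposed fix for multiplicativity does not work: an \emph{algebra} structure on the resolution $P_*$ does not make $\map_A(P_*,-)$ lax symmetric monoidal. What is needed is a cocommutative \emph{coalgebra} structure on $P_*$, so that the comultiplication $P_*\to P_*\otimes_A P_*$ induces
\[
\map_A(P_*,X)\otimes\map_A(P_*,Y)\;\to\;\map_A(P_*\otimes_A P_*,X\otimes Y)\;\to\;\map_A(P_*,X\otimes Y).
\]
Accordingly the paper takes $P_*=A\langle t^\vee\rangle$, the free divided power algebra with $t^\vee$ declared \emph{primitive}, and uses it as a projective resolution of $k$ as an $A$-coalgebra. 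Under the resulting lax structure the multiplication on $\map_A(P_*,UC_*)\cong(UC_*\llrrbra t, td)$ is the Cauchy product on the nose, the diagonal map $C_i\mapsto C_i\cdot t^i$ is visibly a map of commutative algebras in $\Ch_*(k)$, and applying the lax monoidal $\iota\colon\Ch_*(k)\to D(k)$ yields the map in $\CAlg_k$ with no further $\infty$-categorical verification.

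The paper also makes one simplification you might adopt: rather than mapping into $(UC_*)^{tS^1}$ directly, it maps into $(UC_*)^{hS^1}\simeq (UC_*\llrrbra t,td)$ and then composes with the lax symmetric monoidal natural transformation $(-)^{hS^1}\to(-)^{tS^1}$. This avoids having to independently justify the Laurent series model as an object of $\CAlg_k$.
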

\begin{proof}
	Because of the lax monoidal natural transformation $(-)^{hS^1}\to (-)^{tS^1}$, it suffices to establish a natural map $|C_*|\to C_*^{hS^1}$. Under the symmetric monoidal equivalence $\Fun(BS^1,D(k))\simeq\Mod_{A} D(k)$ the functor $(-)^{hS^1}$ corresponds to $\map_{A}(k,-)$. A choice of projective resolution $P_*$ of $k$ as an $A$-coalgebra reduces us to give a functorial map $|C_*|\to\map_{A}(P_*,UC_*)$ where the right hand side is the 1-categorical mapping chain complex. Now put $P_*=(A\langle t^\vee\rangle, d_P)$ as the free divided power algebra on a primitive generator $t^\vee$ in degree $2$ with $d_Pt^\vee=\epsilon$. Thus, computing the mapping chain complex gives an equivalence
	\[\map_A(P_*,UC_*)\cong (UC_*\llrrbra{t},t d)\] 
	for $|t|=-2$ a dual generator to $t^\vee$ and we can explicitly describe a multiplicative chain map $|C_i|\to (UC_*\llrrbra t,td)$ given by $C_i\simeq C_i\cdot t^i\to UC_*\llrrbra t$ on chain groups. This finishes the proof.
\end{proof}
\begin{remark}
	The computation in Lemma \ref{computation} actually completely describes $UC_*^{hS^1}$ and under the equivalence $UC_*^{tS^1}\simeq UC_*^{hS^1}\otimes_{k^{hS^1}}k^{tS^1}$ we already get a full identification $UC_*^{tS^1}\simeq (UC_*(\!(t)\!),td)$.
	
	For $C_*=\dr RkH$ for $R$ smooth over a rational algebra $k$ we thus could have a full understanding of $\HP(R/k)$. However, we will not directly use this, but rather proof a general statement with more structure, that generalizes to non-smooth and animated algebras.
\end{remark}

\section{Main Theorem}\label{main section}

\begin{notation}
	Given $C_*\in\CDGA_k$, we denote by $\Fil^\bullet_H|C_*|$ the filtration
	\[\Fil^n_H|C_*|:=|\tau_{\geq n}C_*|\]
	where $\tau_{\geq n}C_*$ is the part of grading greater or equal $n$.
	Unraveling, $\Fil^\bullet_H|C_*|$ precisely gives the stupid or brutal filtration on the chain complex $|C_*|\in\Ch_*(k)$.
	
	Moreover, for $X\in\Fun(BS^1,\Sp)$ let $\Fil^\bullet_T X^{tS^1}$ be the Tate filtration on $X^{tS^1}$, see Appendix \ref{Tatefiltration} for more details. It is a complete commutatively multiplicative and exhaustive filtration with associated graded $\gr^n\Fil_T X^{tS^1}\simeq X[-2n]$. The Tate-filtration also restricts to a complete (and exhaustive) filtration on $\Fil_T^0X^{tS^1}\simeq X^{hS^1}$.
\end{notation}

\begin{theorem}\label{generalfiltration}
	For $C_*\in\CDGA_k$ the map $|C_*|\to UC_*^{tS^1}$ refines and extends to an equivalence
	\[\left(\Fil^\bullet_H|C_*|\otimes^\Day\Fil^\bullet_Tk^{tS^1}\right)^\wedge\to\Fil^\bullet_TUC_*^{tS^1}\]
	of commutatively multiplicative filtered objects in $D(k)$.
\end{theorem}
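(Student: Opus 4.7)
The plan is to construct the filtered map explicitly and then reduce the claim that it is an equivalence to a calculation on associated graded pieces, where the check reduces to the Künneth formula for Day convolution together with the explicit description of both the Tate filtration and the underlying map of Lemma \ref{computation}.

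First I would construct the map. By Lemma \ref{computation} the underlying map $|C_*|\to UC_*^{tS^1}$ is explicitly described, under the identification $UC_*^{tS^1}\simeq (UC_*\llrrpar{t},td)$, by sending $C_i$ to $C_i\cdot t^i$. The Tate filtration on $UC_*^{tS^1}$ identifies (cf.\ Appendix \ref{Tatefiltration}) with the $t$-adic filtration in this model, so the Hodge component $|C_n|$ lands in $\Fil^n_T UC_*^{tS^1}$, and the map of Lemma \ref{computation} refines to a filtered algebra map $\Fil^\bullet_H|C_*|\to\Fil^\bullet_T UC_*^{tS^1}$. Since $UC_*^{tS^1}$ is a commutative $k^{tS^1}$-algebra and the Tate filtration $\Fil^\bullet_T k^{tS^1}$ is commutatively multiplicative with $\Fil^\bullet_T UC_*^{tS^1}$ as a module filtration (Appendix \ref{Tatefiltration}), tensoring via Day convolution and using the module action yields the desired map of commutatively multiplicative filtered $k$-algebras
\[\Fil^\bullet_H|C_*|\otimes^\Day\Fil^\bullet_T k^{tS^1}\to\Fil^\bullet_T UC_*^{tS^1}.\]

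Next I would reduce to associated graded. The target is complete by the general properties of the Tate filtration recalled in Appendix \ref{Tatefiltration}, and the left hand side becomes complete after applying $(-)^\wedge$. Since completion preserves associated graded, it suffices to check that the constructed map is an equivalence on $\gr^n$ for every $n$. On the right, $\gr^n_T UC_*^{tS^1}\simeq UC_*[-2n]\simeq\bigoplus_i C_i[i-2n]$. On the left, using $\gr^n(F\otimes^\Day G)\simeq\bigoplus_{p+q=n}\gr^p F\otimes\gr^q G$, I get
\[\gr^n\bigl(\Fil^\bullet_H|C_*|\otimes^\Day\Fil^\bullet_T k^{tS^1}\bigr)\simeq\bigoplus_{p+q=n}C_p[-p]\otimes k[-2q]\simeq\bigoplus_p C_p[p-2n].\]
These match summand by summand under the canonical identification $UC_*\simeq\bigoplus_i C_i[i]$, and by the formula $C_p\mapsto C_p\cdot t^p$ combined with the multiplicative action of $\Fil^\bullet_T k^{tS^1}$ the induced map on $\gr^n$ is the identity on each summand.

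The main obstacle will be carefully tracking multiplicativity at every stage in order to ensure the final equivalence is one of \emph{commutatively multiplicative} filtered objects, and not merely of filtered $k^{tS^1}$-modules. This requires invoking the multiplicativity of the Hodge filtration, which follows from $(C_*,d)\in\CDGA_k$ via $\tau_{\geq p}\cdot\tau_{\geq q}\subseteq\tau_{\geq p+q}$; the multiplicativity of the Tate filtration together with its module-filtration property on $\Fil^\bullet_T UC_*^{tS^1}$ from Appendix \ref{Tatefiltration}; the standard fact that Day convolution endows $\Fil(D(k))$ with a symmetric monoidal structure; and finally the fact from Lemma \ref{computation} that the underlying map $|C_*|\to UC_*^{tS^1}$ is already one of commutative $k$-algebras.
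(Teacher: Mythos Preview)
Your proposal is correct and follows essentially the same route as the paper: refine the map of Lemma \ref{computation} to a filtered algebra map using the identification of the Tate filtration with the $t$-adic filtration (Proposition \ref{t-adic filtration}), extend it over the Day convolution via the $\Fil_T^\bullet k^{tS^1}$-module structure on the target, factor through the completion since the target is complete, and then check the equivalence on associated graded via the K\"unneth decomposition. Your computation of $\gr^n$ on both sides agrees with the paper's, and the multiplicativity concerns you flag are exactly the ones the paper handles implicitly by invoking the commutative algebra structure on $\Fil_T^\bullet k^{tS^1}$ and the multiplicativity of the map from Lemma \ref{computation}.
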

\begin{proof}
	In the concrete description of $C_*^{hS^1}$ in Lemma \ref{computation}, we can identify the Tate-filtration with the $t$-adic filtration on $(C^*\llrrbra t,td)$ via Proposition \ref{t-adic filtration} and the map from Lemma \ref{computation} refines to a map of commutatively multiplicative filtrations $\Fil^\bullet_H|C_*|\to\Fil_T^\bullet UC_*^{tS^1}$. Because the target is a module over the commutative algebra $\Fil^\bullet_Tk^{tS^1}$, we get the map 
	\begin{equation}\Fil^\bullet_H|C_*|\otimes^\Day\Fil^\bullet_Tk^{tS^1}\to\Fil^\bullet_TUC_*^{tS^1}\label{map}\end{equation}
	and because the target is complete, it even factors over the completion. To show that we get an equivalence of complete filtrations, it is enough to check on associated graded. Let us introduce a formal character $t$ in degree $-2$ to visually relate Tate filtrations and $t$-adic filtrations and write $\gr^n\Fil^\bullet_Tk^{tS^1}\simeq k[-2n]=:k\cdot t^n$. Then on the $n$th associated graded the map \eqref{map} is given by 
	\[\bigoplus_{i+j=n} C_i[-i]\otimes k\cdot t^j\simeq\bigoplus_{i+j=n} C_i[i]\cdot t^i\otimes k\cdot t^j\to UC_*\cdot t^n\]
	and thus an equivalence by construction, as $UC_*\simeq \bigoplus C_i[i]$ as a complex.
\end{proof}

We can finally return to our situation of interest and immediately get a description of $\HP(R/k)$ in more general situations:

\begin{corollary}\label{main filtration}
	If $k$ is an animated ring with rational homotopy groups and $R$ in $(\CAlg^\an)_{k/}$, then there is an equivalence of commutatively multiplicative complete filtrations
		\begin{equation}\label{filtration}\left(\Fil_H^\bullet L\dr Rk*\otimes^\Day\Fil_T^\bullet k^{tS^1}\right)^\wedge\to \Fil_T^\bullet\HP(R/k).\end{equation}
\end{corollary}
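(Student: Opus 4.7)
The plan is to reduce the statement to Theorem~\ref{generalfiltration} via left Kan extension from polynomial pairs over discrete $\Q$-algebras. The organising idea is that both sides of the claimed equivalence are complete filtrations, so it suffices to construct a natural map and then verify the equivalence on associated gradeds.

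I would start with the case that $k$ is a discrete $\Q$-algebra and $R = k[x_1,\ldots,x_n]$. In this smooth situation, Proposition~\ref{HKR} identifies $\HH(R/k) \simeq U\dr RkH$ as $S^1$-equivariant commutative $k$-algebras, so $\HP(R/k) \simeq U\dr RkH^{tS^1}$, and since $|\dr RkH| \simeq \dr Rk* \simeq L\dr Rk*$ for smooth $R$, the desired multiplicative equivalence is precisely the content of Theorem~\ref{generalfiltration} applied to $C_* = \dr RkH$. To upgrade this to the full generality of the corollary, I would observe that both sides are functorial in the pair $(k,R)$, and that the left-hand side is left Kan extended from polynomial pairs: the Hodge-filtered derived de Rham complex $\Fil_H^\bullet L\dr Rk*$ is by definition such a Kan extension, Day convolution preserves colimits in each variable, and the completion functor $(-)^\wedge$ is a left adjoint and hence preserves colimits. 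Therefore the natural map constructed on polynomial pairs extends uniquely by the universal property of Kan extension to a natural map of commutatively multiplicative complete filtrations in the generality of the corollary.

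To see that this extended map is an equivalence, I would pass to the $n$-th associated graded, where completeness of both sides reduces the problem to a pointwise check. On the left, completion preserves associated gradeds and Day convolution distributes, giving
\[
\gr^n\!\left(\Fil_H^\bullet L\dr Rk* \otimes^\Day \Fil_T^\bullet k^{tS^1}\right)^\wedge \simeq \bigoplus_{i+j=n}\gr_H^i L\dr Rk* \otimes_k k[-2j].
\]
Each Hodge graded piece $\gr_H^i L\dr Rk*$ is a shift of the derived $i$-th exterior power of $L_{R/k}$ and as such left Kan extends from polynomial pairs. On the right, $\gr_T^n \Fil_T^\bullet \HP(R/k) \simeq \HH(R/k)[-2n]$ is also left Kan extended from polynomial pairs, by the very definition of Hochschild homology on animated algebras. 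The comparison on polynomial pairs is an equivalence by the smooth case above, so the same holds on associated graded for all $(k,R)$, and completeness of both filtrations upgrades this to the claimed equivalence.

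The main technical point to be careful with is preserving the commutative multiplicative structure throughout the Kan extension procedure. This is however automatic, because all categories involved are presentable symmetric monoidal $\infty$-categories in which sifted colimits of commutative algebras are computed on underlying objects, so the multiplicative map produced for polynomial pairs extends canonically to a multiplicative map in the generality of the corollary.
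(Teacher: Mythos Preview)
Your proposal is correct and follows essentially the same approach as the paper: reduce to the smooth case via Theorem~\ref{generalfiltration}, then use that both sides (as functors to complete filtrations, checked on associated graded) are left Kan extended from polynomial pairs. The paper organizes this slightly differently by treating discrete $k$ first and then the animated case separately, and it makes explicit the base-change identity $\HH(R/k)\simeq\HH(R/\Q)\otimes_{\HH(k/\Q)}k$ needed to see that $\HH(R/k)$ really is a sifted colimit of the polynomial values when $k$ varies---a point you assert but do not spell out.
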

\begin{proof}
	First assume that $k$ is discrete. We want to show that both sides commute with sifted colimits as functors to $\Fil^\wedge(D(k))$. For $\Fil_H^\bullet L\dr Rk*$ after completion this is by definition and because the Day convolution tensor product commutes with all colimits it follows for the left hand side. As functors to complete filtrations we can also check this on associated graded for the right hand side: And also here any shifts of $\HH(R/k)$ commute with sifted colimits.
	
	We thus can reduce to the case that $k$ is an ordinary $\Q$-algebra and $R$ smooth over $k$. Then the equivalence immediately follows from Theorem \ref{generalfiltration} by putting $C_*=\dr RkH$.

	In the general case of an animated morphism $k\to R$ between animated $\Q$-algebras we can give the exact same proof. Choose a simplicial resolution $k_n\to R_n$ of polynomial algebras. Again by definition $\Fil_H^\bullet L\dr Rk*\simeq\colim \Fil_H^\bullet L\dr {R_n}{k_n}*$ and thus the left hand side is determined by its value on polynomial rings. On the right hand side we check again, that on associated graded we get an equivalence
	\[\HH(R/k)\simeq\HH(R/\Q)\otimes_{\HH(k/\Q)}k\simeq\colim \HH(R_n/\Q)\otimes_{\HH(k_n/\Q)} k_n\] 
	where the first equivalence comes from the base-change formula for Hochschild homology (cf. \cite{AMN17} proof of Theorem 3.4) and the second from the facts that $\HH(-/\Q)$ commutes with colimits in $\CAlg_\Q$ and that the colimit is sifted. Thus also in the general case, the statement reduces to Theorem \ref{generalfiltration}.
\end{proof}

Finally, in order to compute the periodic cyclic homology in our case, we only have to understand the left hand filtration in \eqref{filtration}. There are basically two obstacles, that we have to take care of: Completion does not behave well with Day convolution and does not behave well with underlying objects.
\begin{theorem}\label{main theorem}
	Let $k$ be an animated ring with $\Q\subset \pi_0 k$ and $X$ a derived scheme over $k$. Then there is a natural equivalence of underlying objects in $\Mod_k$
	\[\HP(X/k)\simeq \prod_{n\in\Z}\widehat{L\dr Xk*}[-2n]\] 
\end{theorem}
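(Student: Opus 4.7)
Proof proposal:

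The plan is to derive Theorem \ref{main theorem} from the filtered equivalence of Corollary \ref{main filtration} by extracting underlying objects.

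First I would reduce to the affine case. Both $X \mapsto \HP(X/k)$ and $X \mapsto \prod_{n \in \Z}\widehat{L\dr Xk*}[-2n]$ are Zariski sheaves on derived $k$-schemes: the first by the construction of $\HP$ recalled in the appendix, the second because the Hodge-completed derived de Rham complex is a Zariski sheaf and products of Zariski sheaves are Zariski sheaves. Hence it suffices to treat $X = \Spec R$ for $R \in \CAlg^{\an}_{k/}$.

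Under this reduction, Corollary \ref{main filtration} provides an equivalence of complete filtrations
\[\left(\Fil_H^\bullet L\dr Rk* \otimes^{\Day} \Fil_T^\bullet k^{tS^1}\right)^\wedge \simeq \Fil_T^\bullet \HP(R/k).\]
The Tate filtration on the right is exhaustive, so its colimit is $\HP(R/k)$. It remains to identify the underlying object of the left hand side with $\prod_{n \in \Z}\widehat{L\dr Rk*}[-2n]$.

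For this, I would use the concrete description $\Fil_T^n k^{tS^1} \simeq \prod_{m \geq n} k[-2m]$ of the Tate filtration reviewed in Appendix \ref{Tatefiltration}, together with the symmetric monoidality of the completion functor $(-)^\wedge$ (as the left adjoint to $\Fil^\wedge \hookrightarrow \Fil$). Unpacking the completed Day convolution, the level-$n$ piece is a limit that reassembles the Tate-side product $\prod_{m \geq n} k[-2m]$ with shifts of the Hodge filtration; in that limit the Hodge direction is forced to complete, producing $\prod_{m \geq n} \widehat{L\dr Rk*}[-2m]$ at level $n$. Passing to the colimit as $n \to -\infty$ then yields $\prod_{n \in \Z}\widehat{L\dr Rk*}[-2n]$.

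The main obstacle is the interaction of completion with the Day convolution and with the underlying-object functor, already flagged by the author just before the theorem statement: neither commutes naively, and in particular the underlying of the uncompleted Day convolution is merely $L\dr Rk* \otimes k^{tS^1}$, which is strictly smaller than the desired product. The key structural fact that makes the computation go through is that $\Fil_T^\bullet k^{tS^1}$ is simultaneously complete and exhaustive in characteristic zero and has the rigid product form $\prod_{m \geq n} k[-2m]$; this lets the completion of the Day convolution acquire a levelwise product form that is ready-made for passing to the underlying object and reassembling the Hodge completion in each factor.
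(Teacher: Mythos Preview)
Your reduction to the affine case and invocation of Corollary~\ref{main filtration} match the paper exactly, and you correctly identify that the remaining work is to compute the underlying object of the completed Day convolution. However, the computation you sketch from that point on has two concrete errors.

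First, your claimed level-$n$ identification
\[
\left(\Fil_H^\bullet L\dr Rk*\otimes^\Day\Fil_T^\bullet k^{tS^1}\right)^\wedge\Big|_{\bullet=n}\;\simeq\;\prod_{m\geq n}\widehat{L\dr Rk*}[-2m]
\]
is not correct. The paper instead uses the \emph{splitting} $\Fil_T^\bullet k^{tS^1}\simeq\bigoplus_{m\geq\bullet}k[-2m]$ (not merely the product description) to rewrite the uncompleted Day convolution as $\bigoplus_{m\in\Z}\Fil_H^{\bullet-m}L\dr Rk*[-2m]$, and then shows its completion is $\prod_{m\in\Z}(\Fil_H^{\bullet-m}L\dr Rk*)^\wedge[-2m]$. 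At level $n$ this product ranges over \emph{all} $m\in\Z$, with the $m$-th factor being the Hodge-completed piece $\widehat{\Fil_H^{n-m}L\dr Rk*}[-2m]$; it is not a partial product of copies of the full $\widehat{L\dr Rk*}$. The verification that this product is indeed the completion is done on associated graded, using that each $L\dr Rk{j}$ is bounded below and vanishes for $j<0$, so that the sum and product of associated graded pieces agree degreewise. Your appeal to ``symmetric monoidality of completion'' does not produce this; completion does not commute with the infinite direct sum implicit in the Day convolution, and the passage from $\bigoplus$ to $\prod$ is precisely the content that needs the boundedness argument.

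Second, even granting your level-$n$ formula, the colimit over $n\to-\infty$ of partial products $\prod_{m\geq n}\widehat{L\dr Rk*}[-2m]$ is \emph{not} the full product $\prod_{m\in\Z}\widehat{L\dr Rk*}[-2m]$ in general: this is exactly the exhaustiveness question for the HKR filtration treated immediately after the theorem, and it fails whenever $\widehat{L\dr Rk*}$ is unbounded above. The paper avoids this trap because its level-$n$ product already ranges over all $m\in\Z$; the colimit is then handled by a cofibre-sequence argument showing that $\colim_\bullet\prod_{m}L\dr Rk{\leq\bullet-m-1}[-2m]$ vanishes, again using that these truncations are bounded below and the product is degreewise finite.
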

\begin{proof}
	Because both sides are sheaves in the Zariski topology on $X$ we are reduced to the case $X=\Spec R$ for $R\in\CAlg^{\an}_{k/}$.
	By the above Corollary \ref{main filtration} there is a natural equivalence of filtrations
	\[\left(\Fil_H^\bullet L\dr Rk*\otimes^\Day\Fil_T^\bullet k^{tS^1}\right)^\wedge\to \Fil_T^\bullet\HP(R/k).\]
	Because the Tate-filtration is exhaustive on $\HP(R/k)$ it suffices to compute the underlying object of the left filtration. 
	Now the filtration $\Fil_Tk^{tS^1}$ carries a canonical splitting, because the connecting homomorphism in
	\[\begin{tikzcd}
	\Fil^{n+1}_T k^{tS^1}\ar[r]\ar[d, equal]&\Fil^n_T k^{tS^1}\ar[r]\ar[d, equal]&\gr^n\Fil_T^\bullet k^{tS^1}\ar[d, equal]\\
	k^{hS^1}[-2(n+1)]\ar[r]&k^{hS^1}[-2n]\ar[r]&k[-2n]
	\end{tikzcd}\]
	is forced to vanish for degree reasons, in fact $\Map(k[-2],k^{hS^1}[-2n-3])$ is contractible. Therefore, we have a map of filtrations $\bigoplus_{n\geq\bullet}k[-2n]\to\Fil^\bullet_Tk^{tS^1}$, inducing an equivalence on associated graded, and, thus, as the left hand side is complete, it even is an equivalence of filtrations.
	
	We claim now, that this splitting induces an equivalence \[\prod_{n\in\Z}(\Fil_H^{\bullet-n} L\dr Rk*[-2n])^\wedge\simeq(\Fil_HL\dr Rk*\otimes^\Day\Fil_Tk^{tS^1})^\wedge\]
	Indeed, the canonical map 
	$\bigoplus_{n\in\Z}(\Fil_H^{\bullet-n} L\dr Rk*[-2n])\to\prod_{n\in\Z}(\Fil_H^{\bullet-n} L\dr Rk*[-2n])^\wedge$
	exhibits the right hand side as the completion: It is evidently complete and the map on the $m$-th associated graded
	\[\bigoplus_{n\in\Z}L\dr Rk{m-n}[-2n]\to\prod_{i\in\Z} L\dr Rk{m-n}[-2n]\]
	is an equivalence, because $L\dr Rk{m-n}$ is always bounded below and 0 for $n>m$.
	
	Finally we want to compute the underlying object, i.e. the colimit. Consider the canonical colimit-limit-interchange $\can$ map sitting in the cofibre sequence
	\[\colim\left(\prod_{n\in\Z}\Fil_H^{\bullet-n}\widehat{L\dr Rk*}[-2n]\right)\xrightarrow\can\prod_{n\in\Z}\widehat{L\dr Rk*}[-2n]\to\colim\left(\prod_{n\in\Z}L\dr Rk{\leq\bullet -n -1}[-2n]\right)\]
	But because $L\dr Rk{\leq\bullet- n-1}$ is bounded below for all $n$, and 0 for $n\geq\bullet$, the right most product is actually degreewise finite and, thus, vanishes in the colimit. Now putting everything together gives the result.
\end{proof}

We want to use the result to investigate the exhaustiveness of the $\HKR$-filtration constructed in \cite{Ant19}. It arises from the left Kan extension of the Beilinson Whitehead tower of the Tate filtration on $\HP(-/k)$ from smooth algebras to bicomplete filtrations as the underlying outer filtration. For more details c.f. \textit{loc. cit.} or \cite{BL22} section 6.3. 

\begin{proposition}\label{comparison}
	In the situation of the Theorem \ref{main theorem}, the $\HKR$-filtration on $\HP(R/k)$ can be identified with the filtration by partial products of $\prod_{n\in \Z}\widehat{L\dr {R_n}{k_n}*}[-2n]$. Precisely, \[\Fil^i_{\HKR}\HP(R/k)\simeq\prod_{n\leq -i}\widehat{L\dr {R_n}{k_n}*}[-2n]\]
\end{proposition}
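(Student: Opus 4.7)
The plan is to identify both filtrations first on smooth $k$-algebras and then extend to animated $k$-algebras via the left Kan extension argument built into Antieau's definition of the HKR-filtration. First I would reduce to the affine case $X = \Spec R$, since both sides are Zariski sheaves: the HKR-filtration by construction, and the partial product filtration because $\widehat{L\dr{-}{k}{*}}$ (and products of shifts thereof) descend along open immersions.

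For smooth $R/k$ one has $\widehat{L\dr Rk*}\simeq\dr Rk*$, and Proposition \ref{HKR} together with Lemma \ref{computation} and Proposition \ref{t-adic filtration} provides the explicit model $\Fil_T^\bullet\HP(R/k)\simeq(t^\bullet\dr Rk*\llrrbra t,td)$ with $|t|=-2$. Antieau's HKR-filtration is by construction the Beilinson--Whitehead tower of this Tate filtration, and a direct comparison of associated gradeds --- both give $\dr Rk*[2i]$ at level $i$, sitting in complementary homological ranges --- combined with completeness of both filtrations identifies it with the partial product filtration $\prod_{n\leq -i}\dr Rk*[-2n]$. Thus the claimed equivalence holds on smooth algebras.

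To extend to animated $R$, I would invoke that Antieau's HKR-filtration is by definition the left Kan extension from smooth $k$-algebras, valued in bicomplete filtrations. It therefore suffices to show that the partial product filtration agrees with the analogous Kan extension of its restriction to smooth algebras, which I would verify graded-wise: the $i$-th graded piece on both sides is $\widehat{L\dr Rk*}[2i]$, and since $\widehat{L\dr Rk*}$ is the Hodge completion of the left-Kan-extended $L\dr Rk*$ from smooth algebras, the gradeds match. Completeness of both sides then promotes this to an equivalence of filtrations.

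The main obstacle I anticipate is the compatibility of left Kan extension with Hodge completion and with the infinite partial product appearing on the right-hand side, since neither Hodge completion nor infinite products commute with sifted colimits in general. My proposed handling is to re-use the argument from the proof of Theorem \ref{main theorem}: one splits $\Fil_T^\bullet k^{tS^1}$ to factor the filtration as a product, and then exploits the observation that in each fixed total degree the relevant product is effectively finite (so completion interacts well with the extension). This should reduce the Kan-extension step of the present proposition to a graded-wise check that has already been carried out en route to Theorem \ref{main theorem}.
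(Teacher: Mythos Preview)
Your approach is correct and broadly parallel to the paper's. Both arguments reduce to the smooth case, identify the HKR-filtration there with the partial product filtration, and rely on the Kan-extension definition of the HKR-filtration to propagate the identification.

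The main difference lies in how the smooth case is handled. You propose to compare associated gradeds of the two filtrations and appeal to completeness. The paper instead works directly with the Beilinson $t$-structure: it checks that each factor $\dr Rk*[2n]$, equipped with its induced (shifted Hodge) filtration $\Fil^m_T(\dr Rk*[2n])\simeq \Fil_H^{n+m}\dr Rk*[2n]$, is concentrated in Beilinson degree exactly $n$ (i.e.\ $\Fil^m\in D(k)_{\leq n-m}$ and $\gr^m\in D(k)_{\geq n-m}$). Since the HKR-filtration is by definition the Whitehead tower with respect to this $t$-structure, this immediately yields $\gr^n\Fil_\HKR^\bullet\HP(R/k)\simeq \dr Rk*[2n]$ and hence the splitting into partial products. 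This is cleaner than your version because it uses the very definition of the HKR-filtration rather than setting up a separate comparison map.

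On the extension to animated $R$: the paper dispatches this in one sentence (``by definition of the HKR-filtration we only have to construct equivalences in the case $R$ over $k$ a smooth algebra''), while you are more explicit---and rightly cautious---about why the partial product side also enjoys the bicompleted left-Kan-extension property. Your proposed resolution via the splitting argument from Theorem~\ref{main theorem} works but is heavier than necessary: once the smooth identification is made, both filtrations are complete (the HKR side by construction, the partial product side manifestly) with the same associated gradeds $\widehat{L\dr Rk*}[2i]$, and the universal property of the Kan extension supplies the comparison map. Your caution here is well-placed, but the resolution is lighter than you anticipate.
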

\begin{proof}
	By definition of the $\HKR$-filtration we only have to construct equivalences in the case $R$ over $k$ a smooth algebra. 
	But now the Tate-filtration on $\HP(R/k)$ induces a shifted Hodge filtration on the factor $\dr Rk*[2n]$ with $\Fil^m_T(\dr Rk*[2n])\simeq (\Fil_H^{n+m}\dr Rk*)[2n]$. Because $\Fil_H^{n+m}\dr Rk*\in D(k)_{\leq -n-m}$ we have
	\[\Fil^{m}_T(\dr Rk*[2n])\in D(k)_{\leq n-m}\]
	Moreover, we can similarly compute
	\[\gr^m\Fil_T^\bullet(\dr Rk*[2n])\simeq \dr Rk{n+m}[-n-m+2n]\in D(k)_{\geq n-m}
	.
	\]
	In fact these two conditions precisely show that $\Fil_T^\bullet(\dr Rk*[2n])$ is concentrated in degree $n$ with respect to the Beilinson $t$-structure on $\Fil(D(k))$. From our complete description of $\Fil_T^\bullet\HP(R/k)$ in terms of $\dr Rk*\cdot [2n]$ we get
	\[\Fil_T^\bullet(\dr Rk*[2n])\simeq\pi_n\Fil_T^\bullet\HP(R/k)\simeq\gr^n\Fil_\HKR^\bullet\HP(R/k)\]
	where the last equivalence comes form the definition of the $\HKR$-filtration. In particular, $\HP(R/k)$ decomposes into the product of the associated gradeds of the $\HKR$-filtration, which proves the claim.
\end{proof}

\begin{corollary}
	In the situation of the theorem the $\HKR$-filtration from \cite{Ant19} is exhaustive if and only if $\widehat{L\dr Xk*}$ is bounded above.
\end{corollary}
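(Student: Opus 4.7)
The plan is to leverage the description of the HKR-filtration from Proposition \ref{comparison}. Writing $A := \widehat{L\dr Xk*}$ for brevity, we have $\Fil^i_\HKR \HP(X/k) \simeq \prod_{n \leq -i} A[-2n]$, while Theorem \ref{main theorem} identifies the underlying object $\HP(X/k)$ with $\prod_{n \in \Z} A[-2n]$. Exhaustiveness of $\Fil_\HKR^\bullet$ therefore amounts to the statement that the natural map from $\colim_{i \to -\infty} \prod_{n \leq -i} A[-2n]$ to $\prod_{n \in \Z} A[-2n]$ is an equivalence.

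First, I would reformulate this as a vanishing statement. For each $j \in \Z$ the inclusion of the partial product admits a canonical splitting, producing a split cofibre sequence
\[
\prod_{n \leq j} A[-2n] \longrightarrow \prod_{n \in \Z} A[-2n] \longrightarrow \prod_{n > j} A[-2n]
\]
in $\Mod_k$. Since cofibres are preserved by filtered colimits, exhaustiveness reduces to the vanishing $\colim_{j \to \infty} \prod_{n > j} A[-2n] \simeq 0$.

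For the direction that bounded-above implies exhaustive, assume $A \in D(k)_{\leq m}$. Then $A[-2n] \in D(k)_{\leq m - 2n}$, and since products are left exact for the standard $t$-structure on $\Mod_k$ one obtains $\prod_{n > j} A[-2n] \in D(k)_{\leq m - 2(j+1)}$. Using that $\pi_d$ commutes with filtered colimits in $\Mod_k$ (as $k$ is compact), for every fixed $d$ the group $\pi_d$ of the colimit vanishes once $j$ is large enough, so the colimit itself is zero.

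For the converse I would argue by contrapositive: if $A$ is unbounded above, the functor $\pi_d$ preserves products, so
\[
\pi_d\bigl(\textstyle\prod_{n > j} A[-2n]\bigr) \cong \prod_{n > j} \pi_{d+2n}(A),
\]
with transition maps given by projection onto smaller tails. Unboundedness guarantees a parity $\varepsilon \in \{0, 1\}$ for which $\pi_{\varepsilon + 2n}(A) \neq 0$ for infinitely many $n$; choosing a nonzero element in each such factor yields a sequence that is never eventually zero, hence represents a nonzero class in the colimit $\colim_{j} \pi_\varepsilon\bigl(\prod_{n > j} A[-2n]\bigr)$, contradicting vanishing. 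The main subtlety is the initial reduction to the vanishing statement plus the small parity bookkeeping at the end; once these are in place everything follows from elementary properties of the $t$-structure and compactness of $k$ in $\Mod_k$.
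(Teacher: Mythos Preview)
Your proof is correct and follows essentially the same route as the paper's: both reduce exhaustiveness to the question of when the natural map from the colimit of partial products into the full product $\prod_{n\in\Z}\widehat{L\dr Xk*}[-2n]$ is an equivalence, and then observe that this holds precisely when the shifted copies eventually leave every fixed homotopical degree, i.e.\ when $\widehat{L\dr Xk*}$ is bounded above. Your version simply spells out the cofibre-sequence reduction and the parity argument for the converse that the paper compresses into a single sentence.
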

\begin{proof}
	We can phrase the exhaustiveness as the condition that the natural map
	\[\colimarg i\prod_{n\geq i}\widehat{L\dr Xk*}[2n]\to \prod_{n\in\Z}\widehat{L\dr Xk*}[2n]\simeq\prod_{n\in\Z}\widehat{L\dr Xk*}[-2n]\]
	is an equivalence. This is precisely the case when $\widehat{L\dr Xk*}[2n]$ eventually leaves any fixed degree for $n\to\infty$, precisely when it is bounded above.
\end{proof}

\begin{example}
	In \cite{Ant19} Antieau proves without assumptions on the discrete commutative base ring $k$, that the $\HKR$-filtration is exhaustive if $X$ is quasi-lci over $k$, i.e. $L\dr Rk1$ has Tor-amplitude in $[0,1]$. We recover this statement in our situation via the observation that the lci-condition forces $\widehat{L\dr Xk*}$ to be concentrated in degrees $(-\infty,0]$. 
\end{example}
	
	Moreover, with the result in \cite{Bha12} in the rational setting we can even prove a more drastic result:

\begin{corollary}
	If $k$ is a discrete Noetherian $\Q$-algebra and $X$ a locally finite type scheme over $k$, then the $\HKR$-filtration is exhaustive.
\end{corollary}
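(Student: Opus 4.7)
The strategy is direct: by the preceding corollary, exhaustiveness of $\Fil^\bullet_\HKR\HP(X/k)$ is equivalent to $\widehat{L\dr Xk*}$ being bounded above in $\Mod_k$, so it suffices to verify this bound in the present setting.

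The key input is Bhatt's Corollary 4.27 in \cite{Bha12}, already invoked in Corollary \ref{Bhatt}: for a finite-type algebra $R$ over a discrete Noetherian $\Q$-algebra $k$, the Hodge-completed derived de Rham complex $\widehat{L\dr Rk*}$ is concentrated in non-positive degrees. Since $X$ is locally of finite type, it admits a Zariski cover by affine opens $\Spec R_\alpha$ with each $R_\alpha$ a finite-type $k$-algebra, and on each such open Bhatt's bound applies directly.

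To assemble this local bound into the required global statement, I would invoke Zariski descent for $\widehat{L\dr -k*}$ together with the sheaf descriptions of $\HP$ and of its $\HKR$-filtration recorded in Appendix \ref{scheme}. The comparison map asserting exhaustiveness,
\[\colim_i \prod_{n\geq i}\widehat{L\dr Xk*}[2n] \longrightarrow \prod_{n\in\Z}\widehat{L\dr Xk*}[2n],\]
can be promoted to a map of Zariski sheaves on $X$, and being an equivalence of sheaves is a local condition, so the statement reduces to the affine finite-type case handled by Corollary \ref{Bhatt}. The one technical point I expect to need care --- and which I view as the main obstacle --- is checking that the filtered colimit on the left side commutes with restriction to affine opens in the cover, so that local exhaustiveness indeed implies the global one; this should follow from the locally Noetherian nature of $X$ together with the standard commutation of filtered colimits with sections over Noetherian spaces.
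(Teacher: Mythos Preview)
Your overall strategy matches the paper's: reduce to showing $\widehat{L\dr Xk*}$ is bounded above, and invoke Bhatt's result on affine finite-type pieces. The divergence is in the passage from local to global, and here you make things harder than necessary.

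The paper's argument for this step is one line: $\widehat{L\dr Xk*}=\Gamma(X,\widehat{L\dr -k*})$ is by definition a limit of its values on affine opens, and limits in $\Mod_k$ preserve the property of being concentrated in non-positive degrees. So coconnectivity on affines immediately gives coconnectivity globally, and you are done.

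Your alternative route---promoting the exhaustiveness comparison map to a map of Zariski sheaves and checking it locally---runs into exactly the obstacle you flag: the left-hand side involves a filtered colimit, and you would need $\colim_i\Gamma(X,-)\simeq\Gamma(X,\colim_i-)$. Your proposed justification via ``commutation of filtered colimits with sections over Noetherian spaces'' is shaky here for two reasons. First, $X$ is only \emph{locally} of finite type over $k$, hence locally Noetherian, but need not be quasi-compact; so $|X|$ need not be a Noetherian topological space at all. Second, even over a Noetherian space, the classical statement is about ordinary sheaves of abelian groups, and extending it to $\Mod_k$-valued sheaves requires care. None of this is needed: once you have decided to prove boundedness of $\widehat{L\dr Xk*}$ (your first paragraph), just use that limits are left $t$-exact.
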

\begin{proof}
	The value of $\Gamma(X,\widehat{L\dr -k*})$ can be computed as a limit from the value on affines. But by the explicit description \cite{Bha12} Proposition 4.10. the Hodge completed derived de Rham complex is concentrated in non-positive degree on affines, thus the same is true for the limit.
\end{proof}

\section{Multiplicative Structure}\label{mal}
	In the Corollary \ref{main filtration} the equivalence 
	\[\left(\Fil_H^\bullet L\dr Rk*\otimes^\Day\Fil_T^\bullet k^{tS^1}\right)^\wedge\to \Fil_T^\bullet\HP(R/k)\]
	was compatible with the commutative algebra structures on both sides. Thus we are able to deduce properties of the induced commutative algebra structure on $\prod_{n\in\Z}\widehat{L\dr Rk*}[-2n]$. But first we will describe algebra structures on these big products more generally:

\begin{definition}
	Given a complete and exhaustive commutative multiplicative filtration $R^\bullet\in\CAlg\Fil(\Mod_k)$ on a commutative algebra $R\in\CAlg_k$. We define 
	\[R\essLaur :=\colim\left(R^\bullet\otimes^\Day\Fil_T^\bullet k^{tS^1}\right)^\wedge\]
\end{definition}

\begin{example}\label{example}
	If $R\in\CAlg_k$ for an animated commutative ring $k$, equipped with the constant negatively graded filtration, then we have $R\essLaur\simeq R^{tS^1}$ with respect to the trivial $S^1$-action on $R$. If moreover, $\pi_0k$ is rational, we can even write $R\llrrpar t:= R^{tS^1}$ as the unique commutative algebra in $\Mod_k$ with homotopy groups $\pi_*R\llrrpar t$ for a generator $|t|=-2$.
\end{example}

\begin{corollary}\label{maincor}
	In the situation of Theorem \ref{main theorem} the equivalence refines to a natural equivalence $\HP(X/k)\simeq \widehat{L\dr Xk*}\essLaur$ in $\CAlg_k$.
\end{corollary}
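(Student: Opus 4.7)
The plan is to upgrade the underlying-object identification of Theorem \ref{main theorem} multiplicatively by starting from the filtered equivalence of Corollary \ref{main filtration}, which is already in $\CAlg\Fil(\Mod_k)$, and applying the symmetric monoidal colimit functor.

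First I would reduce to the affine case via Zariski descent: $\HP(-/k)$ is a sheaf of commutative $k$-algebras (Appendix \ref{scheme}) and the right hand side is likewise a sheaf, since $\widehat{L\dr -k*}$ is Zariski-local and the construction $(-)\essLaur$ only involves Day convolution, completion, and colimits. It therefore suffices to produce the multiplicative identification naturally for $X = \Spec R$ with $R \in (\CAlg^\an)_{k/}$.

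In the affine case, Corollary \ref{main filtration} supplies a natural equivalence
\[
\left(\Fil_H^\bullet L\dr Rk* \otimes^\Day \Fil_T^\bullet k^{tS^1}\right)^\wedge \xrightarrow{\sim} \Fil_T^\bullet \HP(R/k)
\]
in $\CAlg\Fil(\Mod_k)$. Since $\colim\colon \Fil(\Mod_k) \to \Mod_k$ is symmetric monoidal for Day convolution, it lifts to $\CAlg$, and on the right hand side delivers $\HP(R/k)$ by exhaustiveness of the Tate filtration. For the left hand side I would use that the Hodge-completion map $\Fil_H^\bullet L\dr Rk* \to \Fil_H^\bullet \widehat{L\dr Rk*}$ is an equivalence on associated gradeds, hence becomes an equivalence after completion. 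Because $\Fil_T^\bullet k^{tS^1}$ is already complete and completion is a symmetric monoidal localization, the induced map
\[
\left(\Fil_H^\bullet L\dr Rk* \otimes^\Day \Fil_T^\bullet k^{tS^1}\right)^\wedge \xrightarrow{\sim} \left(\Fil_H^\bullet \widehat{L\dr Rk*} \otimes^\Day \Fil_T^\bullet k^{tS^1}\right)^\wedge
\]
is a multiplicative filtered equivalence, whose colimit agrees with $\widehat{L\dr Rk*}\essLaur$ by definition, provided one verifies the hypotheses on the filtration: the Hodge filtration on $\widehat{L\dr Rk*}$ is complete by construction and exhaustive, since $\Fil_H^n$ stabilizes to $\widehat{L\dr Rk*}$ for $n \leq 0$ as $L\dr Rk{<n}$ vanishes there.

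The main obstacle I anticipate is justifying symmetric monoidality of the completion localization $(-)^\wedge \colon \Fil(\Mod_k) \to \Fil^\wedge(\Mod_k)$ with respect to Day convolution, which is what allows us to replace $L\dr Rk*$ by its Hodge completion inside the Day convolution without changing the completed result. This reduces to the fact that the class of maps inducing equivalences on associated gradeds is closed under Day convolution with arbitrary filtered objects, which in turn follows from the symmetric monoidality of the associated graded functor $\gr^\bullet\colon \Fil(\Mod_k) \to \Gr(\Mod_k)$.
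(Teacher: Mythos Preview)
Your proposal is correct and follows the same approach the paper intends: the corollary is stated without proof precisely because the definition of $R\essLaur$ is tailored so that it is the colimit of the left-hand side of Corollary \ref{main filtration}, and the multiplicative filtered equivalence there immediately yields the statement upon applying the symmetric monoidal functor $\colim$. The one point the paper leaves implicit and you correctly spell out is the swap of $\Fil_H^\bullet L\dr Rk*$ for $\Fil_H^\bullet\widehat{L\dr Rk*}$ inside the completed Day convolution, which indeed follows from the symmetric monoidality of $\gr^\bullet$ and hence of the completion localization.
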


In fact, in the situation of Corollary \ref{maincor} we can demystify the object $\widehat{L\dr Xk*}\essLaur$. 
The object $R\essLaur$ does not fully depend on $R^\bullet$ as a complete filtered object. We will show a very special case, of this feature:

\begin{lemma}\label{middle}
	If $F^\bullet\in \Mod_k$ is a filtered object with $F^n=0$ for $n$ but finite $n$, then $\colim(F^\bullet\otimes^\Day\Fil_T^\bullet k^{tS^1})^\wedge \simeq 0$. In particular, if $R^\bullet\to \lbar R^\bullet$ is a map in $\CAlg\Fil(\Mod_k)^\wedge$ such that the maps induce equivalences for all but finite $n$, then $R\essLaur\simeq \lbar R\essLaur$.
\end{lemma}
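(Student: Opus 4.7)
The plan is to reduce the main statement to the case where $F^\bullet$ is concentrated at a single filtration level, and then settle that case by direct computation using the canonical splitting of the Tate filtration.

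The ``in particular'' clause will follow from the main statement by taking cofibers. Given $R^\bullet\to\lbar R^\bullet$ that is an equivalence at all but finitely many levels, the cofiber $C^\bullet\in\Fil(\Mod_k)$ satisfies $C^n\simeq 0$ for all but finitely many $n$. Since $-\otimes^\Day\Fil_T^\bullet k^{tS^1}$ is exact in each variable, $(-)^\wedge$ preserves cofibers as a left adjoint, and $\colim$ preserves cofibers, applying all three in sequence to $R^\bullet\to\lbar R^\bullet\to C^\bullet$ produces a cofiber sequence
\[R\essLaur\to\lbar R\essLaur\to\colim\bigl(C^\bullet\otimes^\Day\Fil_T^\bullet k^{tS^1}\bigr)^\wedge,\]
whose third term vanishes by the main statement applied to $C^\bullet$.

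For the main statement I will proceed by induction on the size of the support of $F^\bullet$. If $F^\bullet$ is supported in $[a,b]$, the brutal truncation $\tau_{\geq a+1}F^\bullet$ (equal to $F^\bullet$ at levels $\geq a+1$ and zero below) has support in $[a+1,b]$, and its cofiber in $\Fil(\Mod_k)$ is the filtered object concentrated at level $a$ with value $F^a$. Combined with the exactness argument above, this reduces the claim to the base case where $F^\bullet$ has value $A\in\Mod_k$ at a single level $p$ and vanishes elsewhere.

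In the base case I will invoke the canonical splitting $\Fil_T^\bullet k^{tS^1}\simeq\bigoplus_{n\in\Z}\ins_n(k[-2n])$ from the proof of Theorem \ref{main theorem}, where $\ins_n(X)$ denotes the filtered object with value $X$ at levels $\leq n$ and zero above, together with the formula $F^\bullet\otimes^\Day\ins_n(X)\simeq F^{\bullet-n}\otimes X$. This identifies
\[F^\bullet\otimes^\Day\Fil_T^\bullet k^{tS^1}\simeq\bigoplus_{n\in\Z}F^{\bullet-n}[-2n],\]
in which each summand is concentrated at the single level $n+p$ with value $A[-2n]$. Hence the value at level $m$ is the single summand $A[2p-2m]$ (coming from $n=m-p$), and the transition maps between consecutive levels vanish because they factor through distinct direct summands. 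A sequential tower with null transition maps in a stable category has both vanishing limit and vanishing colimit, so this filtered object is already complete with zero colimit. The main delicate point will be verifying the triviality of the transition maps, but this is immediate from the explicit direct-sum decomposition.
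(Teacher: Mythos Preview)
Your proof is correct and rests on the same key input as the paper---the splitting of $\Fil_T^\bullet k^{tS^1}$ established in the proof of Theorem~\ref{main theorem}---and your handling of the ``in particular'' clause via exactness of $\colim(-\otimes^\Day\Fil_T^\bullet k^{tS^1})^\wedge$ is exactly what the paper does.

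The one genuine difference is that the paper treats an arbitrary finitely-supported $F^\bullet$ directly, without your inductive reduction to the single-level case. After writing $F^\bullet\otimes^\Day\Fil_T^\bullet k^{tS^1}\simeq\bigoplus_{n\in\Z}F^{\bullet-n}[-2n]$, the paper observes that at each filtration level only finitely many summands are nonzero, so the direct sum coincides with the product; since each $F^{\bullet-n}$ is complete (being eventually zero) the product is complete, and since each has zero underlying object (because $\colim F^\bullet=0$) so does the direct sum. This bypasses both your induction on the size of the support and your analysis of transition maps. Your route is slightly longer but perfectly valid; the reduction to a single level is simply unnecessary, as the direct-sum-equals-product observation already handles the general finitely-supported case in one stroke.
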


\begin{proof}
	As in the proof of Theorem \ref{main theorem}, we get an equivalence
	$\bigoplus_{n\in\Z}F^{\bullet-n}[-2n]\xrightarrow\sim F^\bullet\otimes^\Day\Fil_T^\bullet k^{tS^1}$. However, the left hand side is already complete: $F^{\bullet-n}$ is complete because it is eventually 0 and the direct sum is in fact a product, because there are only finitely many non-zeros factors. Finally, the underlying object of $F^\bullet $ is 0 and thus also of the complete filtration $F^\bullet\otimes^\Day\Fil_T^\bullet k^{tS^1}$.
	
	For the last statement, we note that the construction $\colim(-\otimes^\Day\Fil_T^\bullet k^{tS^1})^\wedge$ is exact.
\end{proof}

\begin{proposition}\label{com}
	In Corollary \ref{maincor} we can have further identifications of commutative algebras $\HP(X/k)\simeq\widehat{L\dr Xk*}\essLaur\xrightarrow\sim\lim_m L\dr Xk{\leq m}\llrrpar t$.
\end{proposition}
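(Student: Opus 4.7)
The strategy is to identify, for each $m \geq 0$, a finite Hodge truncation $Q_m^\bullet := F^\bullet/F^{m+1}$ of $F^\bullet := \Fil_H^\bullet L\dr Xk*$ with $Q_m\essLaur \simeq L\dr Xk{\leq m}\llrrpar t$, and to assemble these as $m$ varies into the desired equivalence by passage to the limit. The filtration $Q_m^\bullet$ in $\CAlg\Fil(\Mod_k)$ has underlying algebra $L\dr Xk{\leq m}$, its associated gradeds are concentrated in levels $0, \ldots, m$, and it is automatically complete. Letting $c_m^\bullet$ denote the constant-in-negative-levels filtration on $L\dr Xk{\leq m}$ (i.e.\ $c_m^n = L\dr Xk{\leq m}$ for $n \leq 0$ and $c_m^n = 0$ otherwise), the natural inclusion $c_m^\bullet \hookrightarrow Q_m^\bullet$ of complete filtered commutative algebras is an equivalence at every level except the finitely many $n = 1, \ldots, m$. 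By Lemma \ref{middle} it induces $c_m\essLaur \xrightarrow{\sim} Q_m\essLaur$, and by Example \ref{example} we have $c_m\essLaur \simeq L\dr Xk{\leq m}\llrrpar t$. Composing with the $\essLaur$ of the canonical map $\widehat F^\bullet \to Q_m^\bullet$ (which is defined in complete filtered commutative algebras) produces compatible maps $\widehat{L\dr Xk*}\essLaur \to L\dr Xk{\leq m}\llrrpar t$, assembling to the desired comparison map to $\lim_m L\dr Xk{\leq m}\llrrpar t$ in $\CAlg_k$.

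To show this comparison is an equivalence I would check the statement on underlying $k$-modules. Theorem \ref{main theorem} gives $\widehat{L\dr Xk*}\essLaur \simeq \prod_{n \in \Z}\widehat{L\dr Xk*}[-2n]$, and the same argument applied to $c_m^\bullet$ (which has trivial associated gradeds outside level $0$) yields $L\dr Xk{\leq m}\llrrpar t \simeq \prod_{n \in \Z}L\dr Xk{\leq m}[-2n]$. Since products commute with limits in $\Mod_k$,
\[\lim_m \prod_{n \in \Z} L\dr Xk{\leq m}[-2n] \simeq \prod_{n \in \Z}\lim_m L\dr Xk{\leq m}[-2n] \simeq \prod_{n \in \Z}\widehat{L\dr Xk*}[-2n],\]
so both sides of the comparison have equivalent underlying objects. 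It remains to check that the comparison map itself corresponds, under these product presentations, to the componentwise equivalences $\widehat{L\dr Xk*} \xrightarrow{\sim} \lim_m L\dr Xk{\leq m}$.

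The main obstacle is this final factor-by-factor compatibility. It amounts to tracing the multiplicative splitting of the Tate filtration $\Fil_T^\bullet k^{tS^1}$ used in the proof of Theorem \ref{main theorem} through the filtered maps $\widehat F^\bullet \to Q_m^\bullet \hookleftarrow c_m^\bullet$. Since these are all maps of filtered commutative algebras and the splitting is natural, the induced map on the $n$-th factor is determined by its restriction to the zeroth Hodge level, where the composite is precisely the canonical projection $\widehat{L\dr Xk*} \to L\dr Xk{\leq m}$ followed by the identity on $L\dr Xk{\leq m}$. Taking the limit over $m$ then recovers the identity on each product factor, completing the identification in $\CAlg_k$.
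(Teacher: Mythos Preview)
Your proof is correct and follows essentially the same route as the paper's. Both arguments identify $Q_m\essLaur$ with $L\Omega^{\leq m}_{X/k}\llrrpar t$ via Lemma~\ref{middle} and Example~\ref{example}, then verify the map to the limit is an equivalence on underlying objects using the product decomposition from Theorem~\ref{main theorem} and naturality of the Tate splitting, which is exactly what the paper does in its final paragraph. The only cosmetic difference is in how the comparison map is produced: the paper first observes that $(R^\bullet\otimes^{\Day}\Fil_T^\bullet k^{tS^1})^\wedge \simeq \lim_m (R^\bullet/R^m\otimes^{\Day}\Fil_T^\bullet k^{tS^1})^\wedge$ as complete filtrations (checked on associated graded) and then takes underlying objects, whereas you build the maps $\widehat{L\Omega^*_{X/k}}\essLaur \to L\Omega^{\leq m}_{X/k}\llrrpar t$ directly from the quotient maps $\widehat F^\bullet \to Q_m^\bullet$ together with the zigzag through $c_m^\bullet$; these are two packagings of the same construction.
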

\begin{proof} 
	We start in a general setting: Given a complete multiplicative exhaustive filtration $R^\bullet$ on a $k$-algebra $R$. Set $R^\bullet/R^m$ to be the filtration with $(R^\bullet/R^m)(l):= R^l/R^m$ for $l\leq m$ and 0 otherwise. Because $R^\bullet$ is complete we have $R^\bullet\xrightarrow\sim\lim_m R^\bullet/R^m$. Checking on associated graded we get an equivalence
	\[\left(R^\bullet\otimes^\Day\Fil^\bullet_T k^{tS^1}\right)^\wedge\xrightarrow\sim\lim_m\left(R^\bullet/R^m\otimes^\Day\Fil^\bullet_T k^{tS^1}\right)^\wedge\]
	and thus the natural map $R\essLaur \to \lim_m (R/R^m\essLaur)$. Now if $R^\bullet$ is eventually constant in negative degrees, and because it is eventually 0 in positive degrees $R^\bullet/R^m\essLaur\simeq R/R^m\llrrpar t$ by Lemma \ref{middle} and Example \ref{example}.
	
	Finally, in the concrete situation $R^\bullet= \Fil_H^\bullet\widehat{L\dr Xk*}$, which satisfies this last assumption, we have an easy description of the quotients $\widehat{L\dr Xk{*}}/\widehat{L\dr Xk{\geq m+1}}\simeq L\dr Xk{\leq m}$. And now the proof of Theorem \ref{main theorem} gives an equivalence $L\dr Xk{\leq m}\essLaur\simeq \prod_{n\in \Z} L\dr Xk{\leq m}[-2n]$ on underlying objects, such that the map from $\widehat{L\dr Xk{*}}\essLaur$ can be identified with the natural map $\widehat{L\dr Xk{*}}\to L\dr Xk{\leq m}$ in each factor. In particular this map is an equivalence in the limit.
\end{proof}

We can finally get to the description of the homotopy groups $\HP_*(X/k)$ explained in the introduction. Disregarding the multiplicative structure on $\HP_*(X/k)$ Theorem \ref{main theorem} already gives the additive identification
\[\HP_*(X/k)\cong\prod_{n\in\Z}\pi_{*+2n}\widehat{L\dr Xk{*}}\cong\left\{\sum_{n\in\Z} a_nt^n:a_n\in\pi_{*+2n}\widehat{L\dr Xk*} \right\}\]
with the componentwise addition as stated in the introduction. We will now show how to describe the multiplication: Given $\left(\sum_{n\in\Z}a_nt^n\right),\left(\sum_{n\in\Z}b_nt^n\right)\in\HP_*(X/k)$, then we know that
\begin{equation}\label{cn}\left(\sum_{n\in\Z}a_nt^n\right)\cdot\left(\sum_{n\in\Z}b_nt^n\right)=\sum_{n\in\Z}c_nt^n\end{equation}
for some $c_n\in\pi_*\widehat{L\dr Xk*}$, so that we want to describe these coefficients $c_n$.

\begin{construction}
The graded ring $\pi_*\widehat{L\dr Xk*}$ can be equipped with the coarsest topology making all maps $\pi_*\widehat{L\dr Xk*}\to\pi_*L\dr Xk{\leq m}$ continuous for the discrete topology on the target. Concretely, this means a neighborhood basis of 0 is given by the kernels of these maps above. In particular, the topology cannot separate points that lie in every single such kernel, i.e. lie in the kernel of the surjective map $\pi_*\widehat{L\dr Xk*}\to\lim \pi_*L\dr Xk{\leq m}$. In degree $i$ this is precisely given by $\lim^1\pi_{i+1}L\dr Xk{\leq m}$. In fact $\lim\pi_*L\dr Xk{\leq m}$ is the "Hausdorffization" of this non-Hausdorff topology.
\end{construction}

\begin{lemma}
	In the equation \eqref{cn} the coefficient $c_n$ is a limit of the net $\sum_{i+j=n}a_i\cdot b_j$.
\end{lemma}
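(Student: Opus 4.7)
The plan is to exploit the multiplicative equivalence $\HP(X/k)\simeq\lim_m L\dr Xk{\leq m}\llrrpar t$ of Proposition \ref{com} in order to detect the product in $\HP_*(X/k)$ on each Hodge truncation, where the Laurent multiplication collapses to an honest finite sum.

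Concretely, for each $m$ the projection $\pi_*\HP(X/k)\to\pi_*L\dr Xk{\leq m}\llrrpar t$ is a ring homomorphism. Writing $a_k^{(m)},b_l^{(m)}\in\pi_*L\dr Xk{\leq m}$ for the images of $a_k,b_l$ under $\pi_*\widehat{L\dr Xk*}\to\pi_*L\dr Xk{\leq m}$, it sends our product to the product of $\sum_k a_k^{(m)}t^k$ and $\sum_l b_l^{(m)}t^l$ in $\pi_*L\dr Xk{\leq m}\llrrpar t$. The key finiteness point is this: since the cotangent complex $L_{X/k}$ is connective, the Hodge graded piece $\gr^n_H L\dr Xk*\simeq\Lambda^n L_{X/k}[-n]$ lies in homological degrees $\geq -n$, so $L\dr Xk{\leq m}$ lies in degrees $\geq -m$. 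Consequently $a_k^{(m)}\in\pi_{i+2k}L\dr Xk{\leq m}$ can be nonzero only for $k\geq-(i+m)/2$ (where $i$ is the total homotopy degree), and analogously for $b_l^{(m)}$. With $k+l=n$ fixed, these constraints trap the pairs $(k,l)$ for which $a_k^{(m)}b_l^{(m)}\neq 0$ in a finite window, so $c_n^{(m)}=\sum_{k+l=n}a_k^{(m)}b_l^{(m)}$ is an honestly \emph{finite} sum.

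Since $\pi_*\widehat{L\dr Xk*}\to\pi_*L\dr Xk{\leq m}$ is a ring map one has $(a_kb_l)^{(m)}=a_k^{(m)}b_l^{(m)}$, and $c_n^{(m)}$ is the image of $c_n$ by multiplicativity of the projection. Taking $I_0(m)$ to be the finite set of indices $(k,l)$ with $k+l=n$ contributing non-trivially at level $m$, every finite $J\supseteq I_0(m)$ satisfies
\[c_n-\sum_{(k,l)\in J}a_kb_l\in\ker\bigl(\pi_*\widehat{L\dr Xk*}\to\pi_*L\dr Xk{\leq m}\bigr).\]
As these kernels form a neighborhood basis of zero by the very definition of the topology on $\pi_*\widehat{L\dr Xk*}$, this is precisely the statement that $c_n$ is a net-limit of the partial sums $\sum_{i+j=n}a_i\cdot b_j$.

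The main obstacle I expect is establishing the finiteness of the Laurent multiplication at each truncation level, which hinges on the one-sided boundedness of $L\dr Xk{\leq m}$ coming from connectivity of the cotangent complex. As already flagged after Theorem \ref{algebrastructure}, the topology is non-Hausdorff — its failure to separate being measured by the $\lim^1$-terms $\lim^1_m\pi_{*+1}L\dr Xk{\leq m}$ — so $c_n$ is not uniquely determined as a net-limit; only existence of such a limit is asserted.
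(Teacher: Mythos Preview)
Your proof is correct and follows essentially the same route as the paper: both use the ring maps $\HP_*(X/k)\to\pi_*L\dr Xk{\leq m}\llrrpar t$ from Proposition \ref{com}, exploit the lower bound on the degrees of $L\dr Xk{\leq m}$ to see that only finitely many $(k,l)$ with $k+l=n$ survive at each level $m$, and conclude net-convergence from the kernels forming a neighborhood basis of zero. Your write-up is in fact a bit more careful than the paper's (you justify the connectivity bound via $\gr^n_H\simeq\Lambda^n L_{X/k}[-n]$ and track the general homotopy degree explicitly), but the argument is the same.
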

\begin{proof}
	It is enough to prove this statement for homogeneous elements, and for simplicity assume that $(\sum_{n\in\Z}a_nt^n)$ and $(\sum_{n\in\Z}b_nt^n)$ are both in degree 0. For the general case, one only has to correctly modify the degrees of elements, the arguments are the same.
	
	By definition of the topology on $\pi_*\widehat{L\dr Xk*}$ we have to show, that $c_n-\sum_{(i,j)\in{J_n}}a_i\cdot b_j$ for finite $J_n\subset\{i+j=n\}$ eventually lies in the kernel of the maps $\pi_*\widehat{L\dr Xk*}\to\pi_*L\dr Xk{\leq m}$. By Proposition \ref{com} these maps assemble to ring maps 
	\[\varphi_n\colon\HP_*(X/k)\to \pi_*L\dr Xk{\leq m}\llrrpar t,\]
	where we understand the multiplication of Laurent-series on the target. Moreover, because the coefficients of the target are in degrees $\geq -m$ as a graded ring, we even know, that $a_i\cdot b_j$ is sent to 0 in $\pi_*L\dr Rk{\leq m}$ as soon as $i<m/2$ or $j<m/2$. That means for every family of finite sets $J_n\subset\{i+j=n\}$ containing $I_n:=\{i+j=n:i,j\geq m/2\}$
	\begin{align*}\varphi_n\left(\sum_{n\in\Z}\left(\sum_{J_n}a_i\cdot b_j\right)t^n\right)&=\sum_{n\in\Z}\left(\sum_{I_n}\varphi_n(a_i)\cdot\varphi_n(b_j)\right)t^n\\
	&=\left(\sum_{n\in\Z}\varphi_n(a_n)t^n\right)\cdot\left(\sum_{n\in\Z}\varphi_n(b_n)t^n\right)\end{align*}
	But also by definition we have $\varphi_n\left(\sum_{n\in\Z}c_nt^n\right)=\left(\sum_{n\in\Z}\varphi_n(a_n)t^n\right)\cdot\left(\sum_{n\in\Z}\varphi_n(b_n)t^n\right)$. In particular, taking the difference and restricting again to single coefficients $c_n-\sum_{J_n}a_i\cdot b_j$ is sent to 0 in $\pi_*L\dr Xk{\leq m}$.
\end{proof}
This concludes the description of $\HP_*(X/k)$ given in the introduction.
\appendix

\section{HP of Schemes}\label{scheme}

In this section, we want to carefully describe the extension of Hochschild and periodic cyclic homology to (derived) schemes. 
We will refer to \cite{Lur18} [Section 1.1], \cite{Lur11} and \cite{To14} for an introduction to derived schemes over animated commutative (aka simplicially commutative) rings. We will only sketch the definition:

\begin{definition}
	For an animated commutative $k$-algebra $R$, define the affine derived scheme $\Spec R$ to be the pair $(|\Spec R|,\O_{\Spec R})$ where $|\Spec R|=|\Spec\pi_0R|$ is a topological space and $\O_{\Spec R}$ is a $\CAlg^\an_{k/}$-valued sheaf on $|\Spec R|$ with $\O_{\Spec R}(D(f))\simeq R[f^{-1}]$ for every elementary open $D(f)\subset |\Spec \pi_0R|$.\footnote{The existence of $\Spec R$ is deduced in \cite{Lur18} from Proposition \ref{extend_condition} below.}
	
	A general pair $X=(|X|,\O_X)$ with $|X|$ a topological space and $\O_X\in\Shv_{\CAlg^\an_{k/}}(|X|)$ is called a derived scheme, if there exist an open cover $\U$ of $X$, such that for all $U\in\U$ we have $(U,\O_X|_U)\cong \Spec R$\footnote{Under the appropriate notion of equivalence.} for some $R\in\CAlg^\an_{k/}$.
\end{definition}

\begin{remark}
	This notion generalizes ordinary schemes. In particular given a derived scheme $X$, the underlying ringed space $\pi_0X:=(|X|,\pi_0\O_X)$ is an ordinary scheme and we call a derived scheme $X$ affine\footnote{In fact $X$ is affine, if and only if $X=\Spec R$ for $R\in\CAlg^\an_{k/}$.}, quasi-affine, quasi-compact resp. quasi-separated if $\pi_0X$ is so.
\end{remark}

\begin{definition}\label{sheaf}
	Let $X$ be a derived $k$-scheme. A Zariski-sheaf with values in a category $\C$ on $X$ is a $\C$-valued sheaf on the topological space $|X|$, i.e. a functor $\F\colon\U(X)^\op\to \C$ from the opposite of the poset $\U(X)$ of opens of $|X|$, satisfying
	\[\F(U)\simeq\lim_{\begin{subarray} c\emptyset\neq S\subset I\\\text{finite}\end{subarray}}\F(U_S)\]
	for every $U=\bigcup_{i\in I}U_i\in\U(X)$ and with $U_S=U_{i_0}\cap\ldots\cap U_{i_k}$ for $S=\{i_0,\ldots,i_k\}$.
\end{definition}

Given a derived scheme $X$ over $k$ the goal is now to upgrade the functors $\HH(-/k),$ $\HP(-/k)\colon$ $\CAlg^\an_{k/}\to \Mod_k$ to Zariski-sheaves $\shvHH_k$ and $\shvHP_k$ on $X$ in order to define $\HH(X/k):=\Gamma(X,\shvHH_k)$ and $\HP(X):=\Gamma(X,\shvHP_k)$.

\begin{proposition}\label{extend_condition}
	Given a topological space $X$ and $\mathcal U_e$ a set of open subsets of $X$, such that
	\begin{enumerate}
		\item $\mathcal U_e$ forms a basis of the topology of $X$,
		\item $\mathcal U_e$ is closed under intersections.
	\end{enumerate}
	Then the adjunction $\begin{tikzcd}\Fun(\mathcal U(X)^\op,\C)\ar[r,shift left = 1, "\mathrm{res}"]&\Fun(\mathcal U_e^\op,\C)\ar[l, shift left=1, "\mathrm{Ran}"]\end{tikzcd}$ restrict to an equivalence of sheaf categories
	$\Shv_\C(X)\xrightarrow\sim\Shv_\C(\mathcal U_e)$
	with the induced Grothendieck topology on $\U_e$. If, moreover, $\mathcal U_e$ consist of quasi-compact opens,
	then $\Shv_\C(X)\simeq\Fun'(\U_e^\op,\C)$, where the right hand side consists of those presheaves $\F\colon\U_e^\op\to C$, that satisfy $\F(\emptyset)=0$ and $\F(U\cup V)\simeq \F(U)\times_{\F(U\cap V)}\F(V)$ for $U,V,U\cup V\in\U_e$.
\end{proposition}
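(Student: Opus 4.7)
The plan is to prove both statements by standard sheaf-theoretic arguments exploiting the two hypotheses on $\U_e$: being a basis gives cofinality of basic opens below any open, and closure under intersection ensures that Čech nerves of $\U_e$-covers remain inside $\U_e$.

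For the first equivalence, I would check directly that the adjunction $(\mathrm{res},\mathrm{Ran})$ restricts to an adjoint equivalence on sheaves. The restriction preserves sheaves by the very definition of the induced topology on $\U_e$. Writing the right Kan extension pointwise as $(\mathrm{Ran}\,\G)(U)\simeq \lim_{V\in(\U_e)_{/U}}\G(V)$, the counit $\mathrm{res}\,\mathrm{Ran}\,\G\to\G$ is an equivalence because $V$ is terminal in $(\U_e)_{/V}$. For the unit $\F\to\mathrm{Ran}\,\mathrm{res}\,\F$, evaluated at $U\in\U(X)$ it is the comparison $\F(U)\to\lim_{V\in\U_e,\,V\subset U}\F(V)$; since basic opens below $U$ cover $U$ and closure under intersections makes their nerve into a cofinal Čech diagram, the sheaf condition on $\F$ yields the equivalence. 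A symmetric cofinality argument, reducing a cover of $U\in\U(X)$ to one by basic opens and then to a Čech nerve within $\U_e$, shows that $\mathrm{Ran}\,\G$ is a Zariski sheaf on $X$.

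For the second statement, quasi-compactness lets every induced cover on $\U_e$ be replaced by a finite subcover, so a presheaf $\F$ on $\U_e$ is a sheaf iff $\F(\emptyset)\simeq 0$ and $\F(W)\to\mathrm{Tot}\,\F(V_\bullet)$ is an equivalence for every finite cover $\{V_i\}_{i=1}^n$ of $W\in\U_e$, the Čech nerve lying entirely in $\U_e$ by closure under intersections. One direction is immediate: the Mayer--Vietoris condition is the special case of a two-term cover whose union lies in $\U_e$. For the converse I would form $\tilde\F(U):=\lim_{V\in(\U_e)_{/U}}\F(V)$ on $\U(X)$ and propagate MV on $\U_e$ to two-open descent of $\tilde\F$ on $\U(X)$, namely $\tilde\F(U\cup W)\simeq \tilde\F(U)\times_{\tilde\F(U\cap W)}\tilde\F(W)$ for any $U,W\in\U(X)$. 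Via the first half, $\tilde\F$ then assembles to a Zariski sheaf on $X$ and its restriction recovers $\F$, which is therefore a sheaf on $\U_e$.

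The main obstacle is this propagation step: one must combine MV on $\U_e$ with the cofinalities of basic opens inside each of $U$, $W$, $U\cap W$, $U\cup W$, and the delicate point is matching up these cofinal basic systems into a single pullback diagram. The naive induction on the cardinality of a finite cover fails because the intermediate unions $V_1\cup\ldots\cup V_k$ need not sit in $\U_e$, and it is precisely the passage through the Kan-extended $\tilde\F$ that circumvents this, replacing iteration on the site by iteration on $\U(X)$ where arbitrary finite unions are available. This is a standard fact for coherent topoi, and with it the identification $\Shv_\C(X)\simeq \Fun'(\U_e^\op,\C)$ follows by combining the two halves.
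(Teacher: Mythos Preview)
Your outline is sound, but note that the paper does not prove this proposition at all: it simply cites Hoyois's comparison lemma for the first equivalence and Lurie's SAG Proposition~1.1.4.4 for the second. What you have written is essentially a sketch of those cited proofs, so the two ``approaches'' coincide once one unpacks the references.

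Two small comments on your sketch. First, the closing sentence of your second paragraph is logically redundant (and slightly circular as phrased): once you have established two-open descent for $\tilde\F$ on all of $\U(X)$, induction gives finite-cover descent on $\U(X)$, hence on $\U_e$ by restriction, and quasi-compactness of the elements of $\U_e$ then directly yields the sheaf condition for $\F$ on $\U_e$. You do not need to invoke the first half to conclude; indeed, going through ``$\tilde\F$ is a Zariski sheaf on $X$'' would require descent for infinite covers, which you have not established. Second, you correctly isolate the propagation of Mayer--Vietoris from $\U_e$ to arbitrary pairs in $\U(X)$ as the crux, but you then dispatch it as ``a standard fact for coherent topoi''---and that fact is precisely the content of the Lurie reference the paper cites. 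So in the end your argument rests on the same black box; what you have added is a careful explanation of why the naive induction fails and why passing to $\tilde\F$ on $\U(X)$ is the right manoeuvre, which is useful expository content but not an alternative proof.
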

\begin{proof}
	The first statement is a special case of the infinity categorical comparison Lemma for Grothendieck sites proven in \cite{Hoy14} Lemma C.3, and the second claim is \cite{Lur18} Proposition 1.1.4.4.
\end{proof}

We now do the standard procedure of extending an algebraic functor $\CAlg^\an_{k/}\to \C$ to a sheaf on geometric objects. We proceed in steps:

\begin{lemma}
	Given a quasi-affine derived scheme $X$ over $k$, there are $\Mod_k$-valued sheaves $\shvHH_k$ and $\shvHP_k$ on $X$, extending $\HH(-/k)$ and $\HP(-/k)$, i.e. for all affine open derived subschemes $U\subset X$, the sheaves recover Hochschild homology, resp. periodic cyclic homology:
	\[\Gamma(U,\shvHH_k)\simeq\HH(\O_X(U)/k)\qquad \Gamma(U,\shvHP_k)\simeq\HP(\O_X(U)/k)\]
\end{lemma}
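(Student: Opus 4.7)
The plan is to apply Proposition \ref{extend_condition} to the basis $\U_e$ of affine open derived subschemes of $X$. Since $X$ is quasi-affine, it is separated, and in particular the intersection of two affine opens of $X$ is again affine; moreover, affine opens form a basis of $|X|$ consisting of quasi-compact opens. Therefore $\Shv_{\Mod_k}(|X|)$ is equivalent to the full subcategory of $\Fun(\U_e^\op,\Mod_k)$ on those presheaves $\F$ with $\F(\emptyset)\simeq 0$ and $\F(U\cup V)\simeq \F(U)\times_{\F(U\cap V)}\F(V)$ whenever $U,V,U\cup V\in\U_e$.

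Next, I will take $\F_\HH(U):=\HH(\O_X(U)/k)$ and $\F_\HP(U):=\HP(\O_X(U)/k)$ as candidate presheaves on $\U_e$ and verify both conditions. The empty-set condition is immediate from $\HH(0/k)\simeq 0$. For Mayer--Vietoris on $\F_\HH$: given $U=\Spec R$, $V=\Spec S$, $U\cup V=\Spec T$ in $\U_e$, separatedness identifies $U\cap V\simeq \Spec(R\otimes_T S)$. The flat base-change equivalence $\HH(R[f^{-1}]/k)\simeq \HH(R/k)\otimes_R R[f^{-1}]$ already recalled in the introduction extends to the ring maps $T\to R$, $T\to S$, $T\to R\otimes_T S$ induced by these open immersions, yielding $\HH(R/k)\simeq \HH(T/k)\otimes_T R$ and analogously for $S$ and $R\otimes_T S$. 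Combined with Zariski descent for the structure sheaf, namely the pullback $T\simeq R\times_{R\otimes_T S}S$ of $T$-modules, and using that tensoring with $\HH(T/k)$ along the flat map $T\to R\times S\times (R\otimes_T S)$ preserves this finite limit, I obtain the desired Mayer--Vietoris pullback for $\F_\HH$.

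For $\F_\HP$, I use that the Tate construction $(-)^{tS^1}\colon \Fun(BS^1,\Mod_k)\to \Mod_k$ is an exact functor between stable $\infty$-categories and therefore preserves finite limits; applying it pointwise to the Mayer--Vietoris pullback for $\F_\HH$ gives the corresponding pullback for $\F_\HP$. Thus Proposition \ref{extend_condition} extends $\F_\HH$ and $\F_\HP$ uniquely to sheaves $\shvHH_k$ and $\shvHP_k$ on $|X|$ whose restriction to an affine open $U$ recovers $\HH(\O_X(U)/k)$ and $\HP(\O_X(U)/k)$ respectively.

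The main obstacle is verifying the Mayer--Vietoris condition for $\F_\HH$, which rests on the classical flat base-change property for Hochschild homology together with structure-sheaf descent along an affine open cover of an affine. Once this is in hand, passing to $\F_\HP$ is purely formal via the exactness of the Tate construction.
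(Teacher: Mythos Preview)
Your proof is correct and follows essentially the same route as the paper: both use Proposition~\ref{extend_condition} on the basis of affine opens, reduce $\shvHP_k$ to $\shvHH_k$ via exactness of the Tate construction, and verify Mayer--Vietoris for $\shvHH_k$ by the base-change identification $\HH(\O_X(V)/k)\simeq\HH(\O_X(U)/k)\otimes_{\O_X(U)}\O_X(V)$ together with descent for $\O_X$. The only small remark is that your appeal to flatness is unnecessary---tensoring in $\Mod_T$ is exact regardless---and that the extension of the localization formula to an arbitrary affine open immersion $V\hookrightarrow U$ deserves the one-line justification the paper gives (check the map of $\O_X(V)$-modules locally on distinguished opens $D(f)\subset V$ coming from $\O_X(U)$).
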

\begin{proof}
	Set $\U_e$ to be the set of affine open derived subschemes of $X$. Then $\HH(-/k)$ and $\HP(-/k)$ give functors $\U_e^\op\to \Mod_k$ and let $\shvHH_k$ and $\shvHP_k$ denote their right Kan extension along $\U_e^\op\to\U(X)^\op$. We want to argue, that these are already Zariski-sheaves on $X$. 
	
	Because $X$ is quasi-affine, intersections of affines are computed in a surrounding affine derived scheme, and are affine again. The collection $\U_e$, thus, satisfies the conditions 1), 2) of Proposition \ref{extend_condition} and contains only quasi-compact opens, so that we are reduced to checking that the functors $\HH(-/k)$ and $\HP(-/k)$ satisfy the finite limit condition of $\Fun'(\U_e^\op,\Mod_k)$. As the Tate-construction commutes with finite limits, it is enough to only show the claim for Hochschild homology.
	
	For $R\in\CAlg_{k/}^\an$ the natural map $R\to\HH(R/k)$ in $\CAlg_k$ equips $\HH(R/k)$ with a module structure over $R$, such that for a map of animated commutative rings $R\to R'$ the functoriality induces a map $\HH(R/k)\otimes_RR'\to\HH(R'/k)$ in $\Mod_{R'}$. Now if $U\subset X$ is an affine open derived subscheme of $X$, then for every other affine open $V\subset U$ this map 
	\[\HH(\O_X(U)/k)\otimes_{\O_X(U)}\O_X(V)\to\HH(\O_X(V)/k)\]
	is an equivalence. Indeed, it suffices to check this locally on $V$, so we can reduce to distinguished opens $D(f)\subset V\subset U$ for $f\in\pi_0\O_X(U)$. But using that $\HH(-/k)$ commutes with filtered colimits we can identify both sides with $\HH(\O_X(U)[f^{-1}])$.
	
	Finally, assume that $F:I\to\U_e$ is a finite diagram with colimit $U$ as appearing in Proposition \ref{extend_condition}, then by the above $\HH(\O_X(F^\op(-))/k)\simeq\HH(\O_X(U)/k)\otimes_{\O_X(U)}\O_X(F^\op(-))$ and we win as tensoring is exact and $\O_X(F^\op(-))$ is a finite limit diagram due to the sheaf condition of $\O_X$ (using Proposition \ref{extend_condition} in the other direction).
\end{proof}

\begin{lemma}
	Given an arbitrary derived $k$-scheme $X$, we can furthermore extend Hochschild and periodic cyclic homology to sheaves $\shvHH_k$ and $\shvHP_k$ on $X$. Moreover, for all open qcqs derived subschemes $U\subset X$ we have
	\[\Gamma(U,\shvHP_k)\simeq \Gamma(U,\shvHH_k)^{tS^1}\]
\end{lemma}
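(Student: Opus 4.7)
The approach is to glue the sheaves constructed in the quasi-affine case along an affine cover. Given $X$ a derived $k$-scheme, fix the basis $\U_{\aff}$ of affine open derived subschemes of $X$. On each $U = \Spec R \in \U_{\aff}$ the previous lemma produces sheaves $\shvHH_k^U$ and $\shvHP_k^U$ on $U$ extending $\HH(-/k)$ and $\HP(-/k)$ on affines contained in $U$. The crucial compatibility is that for $V\subset U$ any open qcqs derived subscheme, the restriction $\shvHH_k^U\vert_V$ agrees with $\shvHH_k^V$ and similarly for $\shvHP_k$; this follows because both sides are determined by their values on the basis of affine opens of $V$, and on such affine opens both evaluate to $\HH(\O_X/k)$. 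Since any two affine opens $U_1, U_2 \subset X$ have intersection $U_1 \cap U_2$ that is quasi-affine (being an open of $U_1$), I can apply the previous lemma in the quasi-affine setting to $U_1 \cap U_2$ to get the gluing data.

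More concretely, I would argue via the sheaf-theoretic form of Proposition \ref{extend_condition}: the collection $\U_{\aff}$ is a basis closed under pullback to smaller affines; defining the presheaf $\HH(\O_X(-)/k)\colon \U_{\aff}^\op\to \Mod_k$, I must verify the finite-limit (Mayer-Vietoris) condition for $U, V, U\cup V \in \U_{\aff}$. But when $U\cup V$ is affine, $U$, $V$ and $U\cap V$ are quasi-affine subschemes of it, and the quasi-affine case already yields the required descent on $U\cup V$, hence evaluating at its global sections gives the needed finite-limit statement. Therefore $\shvHH_k:=\mathrm{Ran}\,\HH(\O_X(-)/k)$ is a sheaf on $X$, and similarly for $\shvHP_k$.

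For the qcqs comparison $\Gamma(U,\shvHP_k)\simeq \Gamma(U,\shvHH_k)^{tS^1}$, I use that a qcqs derived scheme $U$ admits a \emph{finite} affine cover $U=\bigcup_{i=1}^n U_i$ whose pairwise (and higher) intersections are again qcqs, hence themselves admit finite affine covers with qcqs intersections. Iterating, $\Gamma(U,\shvHH_k)$ is computed by a finite Čech-type limit of values $\HH(\O_X(W)/k)$ at affines $W$. The Tate construction $(-)^{tS^1}\colon\Fun(BS^1,\Mod_k)\to\Mod_k$ commutes with finite limits (being the cofibre of a norm map between functors each preserving finite limits), so applying $(-)^{tS^1}$ to this finite limit diagram yields the same finite limit of $\HP(\O_X(W)/k)$, i.e.\ $\Gamma(U,\shvHP_k)$.

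The main obstacle I expect is bookkeeping rather than conceptual: ensuring compatibility of the gluing data as one varies the ambient affine, and carefully checking that the Čech computation of $\Gamma(U,-)$ for qcqs $U$ uses only finite limits — this ultimately rests on the fact that quasi-compact intersections of qcqs opens inside a qcqs derived scheme remain qcqs and thus inherit finite affine covers, allowing one to descend inductively on the combinatorial complexity of the cover without leaving the finite-limit regime.
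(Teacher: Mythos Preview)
Your strategy is sound and close to the paper's, but there is a genuine gap in the ``more concretely'' paragraph: Proposition~\ref{extend_condition} requires the basis $\U_e$ to be closed under intersections, and $\U_{\aff}$ is \emph{not} closed under intersections for a non-separated $X$. Your phrase ``closed under pullback to smaller affines'' does not substitute for this. The Mayer--Vietoris squares you check (with $U,V,U\cup V$ all affine) are fine individually---since $U\cup V$ affine implies it is separated, so $U\cap V$ is indeed affine---but the $\Fun'$ characterisation you invoke is only proved under the intersection-closure hypothesis, so you are not entitled to conclude that the right Kan extension is a sheaf from these squares alone.

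The paper fixes this with one clean move: it takes $\U_e$ to be the collection of \emph{quasi-affine} open derived subschemes of $X$. This \emph{is} closed under intersections (an open in a quasi-affine is quasi-affine), so Proposition~\ref{extend_condition} applies directly, and the previous lemma already tells you that $\HH(-/k)$ and $\HP(-/k)$ are sheaves on this basis. No gluing bookkeeping is needed. For the qcqs comparison your idea is correct, but your inductive ``iterating'' argument leaves termination implicit. The paper does it in two explicit steps: a finite affine cover of a qcqs $U$ has intersections $U_S$ that are quasi-compact (by quasi-separatedness of $U$) and quasi-affine (being open in an affine $U_{i_0}$); then a finite affine cover of a qc quasi-affine has \emph{affine} intersections. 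Thus $\Gamma(U,\shvHP_k)$ is visibly a finite limit of values on affines, and exactness of $(-)^{tS^1}$ finishes it.
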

\begin{proof}
	Let $\U_e$ now be the set of quasi-affine open derived subschemes of $X$, which satisfies 1) and 2) of Proposition \ref{extend_condition}. By the last Lemma $\HH(-/k)$ and $\HP(-/k)$ extend to sheaves on $\U_e^\op$ and by Proposition \ref{extend_condition} thus further extend to sheaves on entire $X$.
	
	Now take $U\subset X$ a quasi-compact quasi-separated derived open subscheme. Because of quasi-compactness there exist a finite open cover of $U$ by affine open subschemes $U_1,\ldots U_n$ and by the sheaf condition we get
	\[\Gamma(U,\shvHP_k)\simeq\lim_{S\subset[1,n]}\Gamma(U_S,\shvHP_k)\]
	in the notation of Definition \ref{sheaf}. Each $U_S$ is now quasi-affine as an open derived subscheme of an affine and quasi-compact by the quasi-separatedness of $U$. Thus, because the limit above is finite, it satisfies to check the claim for $U$ quasi-compact quasi-affine. Again, choosing a finite open cover by affines and using that the intersection of affines in quasi-affines is affine again, we can even reduce to the case that $U$ is an affine open. But in this case 
	\[\Gamma(U,\shvHP_k)\simeq\HP(\O_X(U)/k)\simeq\HH(\O_X(U)/k)^{tS^1}\simeq\Gamma(U,\shvHH_k)^{tS^1}\qedhere\]
\end{proof}

\begin{remark}\label{qcqs_finite}
	The proof of the last Lemma shows even more: For any sheaf $\F$ on a derived scheme $X$, the sections $\Gamma(U,\F)$ over a qcqs open derived subscheme $U$ are computed as a finite limit of the values of $\F$ on affines.
\end{remark}

\section{Tate Filtration}\label{Tatefiltration}
In this section we want to review the construction of the classical Tate-filtration introduced in \cite{GM95}. This content is not new and also recently has been explained in \cite{BL22} section 6.1. We would like to particular put a focus on multiplicative structures.

\begin{definition}
	Given a representation $\rho\colon S^1\to\GL(V)$ of $S^1$, the representation sphere $S^V$ is the one-point compactification of $V$. Furthermore we define $\S^V:=\Sigma^\infty S^V$ as the suspension spectrum of the representation sphere.
\end{definition}

\begin{remark}
	Note that if $V$ is finite dimensional there immediately is an equivalence $S^V\simeq S^{\dim_\R V}$, so that the homotopy type of $S^V$ only depends on the dimension of $V$. However, the $S^1$-action really uses the representation $S^1\to \GL(V)$.
\end{remark}

\begin{example}\label{standard representation}
	For $V=\Cx$ there is the standard representation given by $S^1\simeq U(1)\hookrightarrow \Cx^\times$. Its representation sphere sits in the pushout
	\[\begin{tikzcd}
	S^1\ar[r]\ar[d]&*\ar[d]\\
	*\ar[r]&S^V
	\end{tikzcd}\]
	with $S^1$-acting freely on itself. Thus, after adding basepoints to the top row $\Sigma^\infty$ gives a fibre sequence $\S[S^1]:=\Sigma^\infty_+S^1\to \S\to \S^V$ of spectra with $S^1$-action.
\end{example}

\begin{construction}
	Let $V$ be a finite dimensional representation of $S^1$. The map $0\to V$ of representations induces a sequence
	\[0\to V\to V\oplus V\to V\oplus V\oplus V\to\cdots\]
	which translates to the representation sphere spectra to a $\Z$-graded filtration
	\begin{equation}\label{repfiltration}
	\S^{\bullet V}:=\cdots\to \S^{-2V}\to\S^{ -V}\to\S\to \S^{V}\to\S^{ 2V}\to\S^{ 3V}\to \cdots
	\end{equation}
	where $\S^{ -nV}:=DS^{ nV}$ is the Spanier-Whitehead dual.
	Now if $V\neq 0$ all maps have to be non-equivariantly nullhomotopic, but this is definitely not the case with respect to the $S^1$-action. We will see this later in Proposition \ref{convergence to tate}.
\end{construction}
\begin{definition}
	Given a spectrum $X\in\Sp^{BS^1}$ with $S^1$-action, we define the Tate-filtration $\Fil_T X^{tS^1}$ as
		\[\cdots {\to} \left(\S^{  -2V}{\otimes} X\right)^{hS^1} {\to}\left(\S^{  -V}{\otimes} X\right)^{hS^1} {\to} X^{hS^1} {\to} \left(\S^{V}\otimes X\right)^{hS^1} {\to}\left(\S^{  2V}{\otimes} X\right)^{hS^1} {\to} \cdots\]
	for $V$ the standard representation of $S^1$ constructed in Example \ref{standard representation}.
\end{definition}
This definition would not be sensible if this would not give a filtration on $X^{tS^1}$ and we are bound to prove:

\begin{proposition}\label{convergence to tate}
	For $X\in\Sp^{BS^1}$ the Tate filtration $\Fil_T^\bullet X^{tS^1}$ is complete with underlying object $X^{tS^1}$.
\end{proposition}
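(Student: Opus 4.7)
The plan is to prove both claims---identification of the underlying object with $X^{tS^1}$ and completeness---in parallel, using a single Greenlees--May style construction. Set $\widetilde{ES^1} := \colim_{m \to +\infty}\S^{mV}$ in $\Sp^{BS^1}$. This object fits into a cofibre sequence $\Sigma^\infty_+ ES^1 \to \S \to \widetilde{ES^1}$, obtained by passing to the colimit in the compatible family of finite-stage cofibre sequences $\Sigma^\infty_+ S^{2n+1} \to \S \to \S^{(n+1)V}$ coming from the unit-sphere $S^1$-bundles $S^{2n+1} \to \CP^n \subset \CP^\infty$ (where $S^{2n+1}$ sits inside $\Cx^{n+1} = (n+1)V$).

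For the underlying object, I would argue $\colim_{m \to +\infty}(\S^{mV}\otimes X)^{hS^1} \simeq (\widetilde{ES^1}\otimes X)^{hS^1} \simeq X^{tS^1}$. The second equivalence is the classical Greenlees--May identification: tensoring the above cofibre sequence with $X$ and applying $(-)^{hS^1}$ recovers the Tate fibre sequence $X_{hS^1} \to X^{hS^1} \to X^{tS^1}$. The first equivalence---commuting the filtered colimit past $(-)^{hS^1}$---is justified by the observation that the cofibres of the tower are of the form $X[2m]$ (computed from the fibre sequence $\Sigma^\infty_+ S^1 \otimes \S^{mV} \to \S^{mV} \to \S^{(m+1)V}$), which become highly connective as $m \to +\infty$, so the colimit stabilises degreewise when $X$ is bounded below; the general case reduces to this via a Whitehead tower approximation. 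For completeness, I would use Spanier--Whitehead duality to rewrite $(\S^{-nV}\otimes X)^{hS^1} \simeq F(\S^{nV}, X)^{hS^1}$, where $F$ denotes the internal hom in $\Sp^{BS^1}$. Since both $F(-,X)$ and $(-)^{hS^1}$ preserve limits, we obtain $\lim_{n \to +\infty}\Fil_T^n X^{tS^1} \simeq F(\widetilde{ES^1}, X)^{hS^1}$. Applying $F(-,X)^{hS^1}$ to the same cofibre sequence yields a fibre sequence reducing the vanishing of this limit to showing that the natural map $X^{hS^1} \simeq F(\S, X)^{hS^1} \to F(\Sigma^\infty_+ ES^1, X)^{hS^1}$ is an equivalence; the target in turn identifies with $X^{hS^1}$ via the adjunction $\Sigma^\infty_+ \dashv \Omega^\infty$ and the Borel construction $\Map^{S^1}(\Sigma^\infty_+ ES^1, X) \simeq \Omega^\infty X^{hS^1}$.

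The main obstacle will be to verify carefully that the map induced by the augmentation $\Sigma^\infty_+ ES^1 \to \S$ really does agree with the identity on $X^{hS^1}$ under the two identifications sketched above; this is essentially a bookkeeping question about the Borel construction, but it is the technical crux. Once it is in place, both completeness and the identification of the underlying object drop out of the same cofibre sequence $\Sigma^\infty_+ ES^1 \to \S \to \widetilde{ES^1}$, used in its two complementary roles: via $\otimes X$ on the colimit side and via $F(-,X)$ on the limit side.
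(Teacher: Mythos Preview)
Your completeness argument is correct and is in essence the paper's: since $F(-,X)$ and $(-)^{hS^1}$ preserve limits, one has $\lim_n \Fil^n_T X^{tS^1}\simeq F(\colim_n \S^{nV},X)^{hS^1}$, and $\widetilde{ES^1}:=\colim_n\S^{nV}$ vanishes in the Borel category $\Sp^{BS^1}$ because its underlying spectrum is a sequential colimit along non-equivariantly nullhomotopic maps. Your further appeal to the cofibre sequence $\Sigma^\infty_+ ES^1\to\S\to\widetilde{ES^1}$ is not wrong, but it is redundant once one knows $\widetilde{ES^1}\simeq 0$; the ``main obstacle'' you flag is trivial in Borel since $ES^1$ is contractible there.

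The argument for the underlying object, however, has a genuine gap. Because $\widetilde{ES^1}\simeq 0$ in $\Sp^{BS^1}$, one has $(\widetilde{ES^1}\otimes X)^{hS^1}=0$. The Greenlees--May identification $(\widetilde{ES^1}\otimes X)^{hS^1}\simeq X^{tS^1}$ you invoke is a statement in \emph{genuine} $S^1$-spectra, where $\widetilde{ES^1}$ is nonzero; it is vacuous in the Borel setting used here. Thus your claimed interchange $\colim_m(\S^{mV}\otimes X)^{hS^1}\simeq(\widetilde{ES^1}\otimes X)^{hS^1}$ cannot hold: the right side is $0$ while the left is the $X^{tS^1}$ we are after. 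The failure of $(-)^{hS^1}$ to commute with this filtered colimit is precisely what produces the Tate construction, so no connectivity argument can repair it; the observation that the cofibres are shifts of $X$ only shows the colimit stabilises degreewise, not that it vanishes. The paper instead applies the norm cofibre sequence $Y_{hS^1}\to Y^{hS^1}\to Y^{tS^1}$ to $Y=\S^{mV}\otimes X$ and passes to the colimit: homotopy orbits $(-)_{hS^1}$ \emph{do} commute with colimits, giving $0$ on the left, while on the right each transition map is an equivalence on $(-)^{tS^1}$ because the successive fibres $\S^{mV}\otimes\S[S^1]\otimes X$ are induced. The colimit of the middle term is then $X^{tS^1}$, for arbitrary $X$.
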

\begin{proof}
	Because homotopy fixed points, as a limit, preserve completeness it satisfies to prove that $\lim \S^{-nV}\otimes X\simeq 0$ as a spectrum. But here we can compute
	\[\lim\S^{-nV}\otimes X\simeq \lim \map\left(\S^{nV},X\right)\simeq \map\left(\colim \S^{nV},X\right)\simeq 0
	\]
	because the colimit goes along nullhomotopic maps. However, as already indicated, those maps are not equivariantly nullhomotopic In fact every map
	\[\S^{-nV}\otimes X\to \S^{(-n+1)V}\otimes X\]
	induces an equivalence on the $S^1$-Tate construction. Indeed, via Example \ref{standard representation} we can identify the fibre as $\S^{-nV}\otimes \S[S^1]$, which is an induced $S^1$-spectrum, such that Tate vanishes on this fibre. Now we can look at the $\Z$-indexed fibre sequences defining the Tate constructions of $\S^{-nV}\otimes X$:
	\[\begin{tikzcd}
	\Sigma\left(\S^{-nV}\otimes X\right)_{hS^1}\ar[r]&\underbrace{\left(\S^{-nV}\otimes X\right)^{hS^1}}_{\simeq \Fil^n_TX^{tS^1}}\ar[r]&\left(\S^{-nV}\otimes X\right)^{tS^1}.
	\end{tikzcd}\]
	By the observation above the right hand filtration is constant at $X^{tS^1}$. The colimit of the left hand filtration vanishes, because commuting the colimit with $\Sigma(-\otimes X)_{hS^1}$ reduces again to computing a filtered colimit along nullhomotopic maps, which is 0. Thus together we see $\colim\Fil^n_T X^{tS^1}\simeq X^{tS^1}$. 
\end{proof}

We are interested in possible algebra structures on the Tate filtration. Because $(-)^{tS^1}$ is lax monoidal, $X^{tS^1}$ for an algebra $X\in\Alg(\Sp)$ inherits an algebra structure again. However, the question of algebra structures on $\Fil_T X^{tS^1}$ with respect to the Day convolution is more subtle.

We will use the following different description of the filtered category as a modules over a graded algebra. This insight comes from Lurie in \cite{Lur15} 3.2 and in this form is in \cite{Rak20} Proposition 3.2.9.

\begin{definition}
	For a stable symmetric monoidal category $\C$ with unit $\unit$, let $\unit[\beta]$ denote the underlying graded object of the unit in $\Fil(\C)$. It is a commutative algebra in $\Gr(\C)$ with underlying graded object $\bigoplus_{n\leq 0}\unit$. 
\end{definition}

Every object in the symmetric monoidal category $\Fil(\C)$ is canonically a module over the unit, such that the symmetric monoidal forgetful functor $\Fil(\C)\to\Gr(\C)$ refines to a functor $\Fil(\C)\to \Mod_{\unit[\beta]}(\Gr(\C))$. In fact remembering this action of $\unit[\beta]$ recovers the full filtered object:

\begin{theorem}[\cite{Rak20}]\label{cite1}
	For a symmetric monoidal stable category $\C$ the above functor $\Fil(\C)\to \Mod_{\unit[\beta]}(\Gr(\C))$ is an equivalence of symmetric monoidal categories.
\end{theorem}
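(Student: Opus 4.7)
The plan is to exhibit $\Fil(\C)\simeq\Mod_{\unit[\beta]}(\Gr(\C))$ as monadic descent along a symmetric monoidal adjunction. The first step is to observe that $\Fil(\C)=\Fun(\Z_\leq^\op,\C)$ and $\Gr(\C)=\Fun(\Z,\C)$ (with $\Z$ discrete) are related by restriction $i^*$ along the symmetric monoidal functor $i\colon\Z\hookrightarrow\Z_\leq^\op$, where both sides carry the additive $\Einfty$-structure. By the universal property of Day convolution, $i^*$ is lax symmetric monoidal and its left adjoint $i_!=\Lan_i$ is strongly symmetric monoidal. Since $i^*$ is pointwise evaluation, it preserves all limits and colimits and is conservative; direct inspection shows that it sends the unit of $\Fil(\C)$ (which is the left Kan extension of $\unit_\C$ along $\{0\}\hookrightarrow\Z_\leq^\op$) to the graded algebra $\unit[\beta]=\bigoplus_{n\leq 0}\unit$.

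Next, I would invoke the Barr--Beck--Lurie theorem (\cite{Lur16}, Theorem 4.7.3.5) to conclude that $i^*$ is monadic, so that $\Fil(\C)\simeq\Mod_T(\Gr(\C))$ for the monad $T=i^*i_!$. To identify $T$, compute the left Kan extension directly: the comma category $i\downarrow n$ over $n\in\Z_\leq^\op$ is the discrete poset on $\{m\in\Z:m\geq n\}$, so $(i_!X)(n)\simeq\bigoplus_{m\geq n}X(m)$. On the other hand, the Day convolution on $\Gr(\C)$ yields $(X\otimes^\Day\unit[\beta])(n)\simeq\bigoplus_{i+j=n,\,j\leq 0}X(i)\simeq\bigoplus_{m\geq n}X(m)$. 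Hence $T\simeq(-)\otimes^\Day\unit[\beta]$ as monads, so $\Mod_T(\Gr(\C))\simeq\Mod_{\unit[\beta]}(\Gr(\C))$, which yields the underlying equivalence of $\infty$-categories.

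Finally, I would upgrade to a symmetric monoidal equivalence. Since $i_!\dashv i^*$ is a symmetric monoidal adjunction whose left adjoint is strongly symmetric monoidal, the induced monad $T$ canonically acquires an $\Einfty$-structure, and the identification $T\simeq(-)\otimes^\Day\unit[\beta]$ is then one of $\Einfty$-monads on $\Gr(\C)$. The general machinery of symmetric monoidal monadic adjunctions in the $\infty$-categorical setting (see \cite{Lur16}, sections 3.4.4 and 4.8.5) then promotes the monadic equivalence to a symmetric monoidal one.

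The main obstacle I anticipate is not the bare equivalence but the symmetric monoidal upgrade: the concrete computation of the monad via restriction and left Kan extension and the application of Barr--Beck--Lurie are routine for such a simple indexing poset, but coherently threading symmetric monoidality through the monadic descent requires care with the compatibility between the $\Einfty$-structure on $\unit[\beta]$ and the monad structure on $T$. This coherence can, however, be black-boxed to standard results on symmetric monoidal adjunctions from \cite{Lur16}.
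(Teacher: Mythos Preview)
The paper does not supply its own proof of this theorem: it is stated purely as a citation to \cite{Rak20}, Proposition~3.2.9 (with the underlying idea attributed to \cite{Lur15}, Section~3.2), and is used as a black box. Your monadic-descent argument via the restriction/left-Kan-extension adjunction along $i\colon\Z\hookrightarrow\Z_\leq^\op$, together with the identification $i^*i_!\simeq(-)\otimes^\Day\unit[\beta]$, is precisely the standard argument that appears in those references, and your outline is correct. Since there is no in-paper proof to compare against, there is nothing further to contrast; your sketch simply reconstructs what the paper is citing.
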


In our situation we want to use this for $\C=\Sp_\Q^{BS^1}$ and show that the filtered object $\S^{-\bullet V}\otimes \Q$ is a commutative algebra in $\Fil(\Sp_\Q^{BS^1})$. 
There is also an algebraic description of this category due to Greenlees-Shipley \cite{GS09} in the non-Borel-complete setting and later as we use it here by \cite{MNN17}. Similar to above, because again in $\Sp_\Q$ every object carries a canonical module structure over the unit $\Q$, the lax functor $(-)^{hS^1}:\Sp_\Q^{BS^1}\to\Sp_\Q$ refines to a functor into $\Mod_{\Q^{hS^1}}(\Sp_\Q)$ and we have as a special case of Theorem 7.35 in \cite{MNN17}:

\begin{theorem}[\cite{MNN17}]\label{cite2}
	The functor $(-)^{hS^1}\colon\Sp_\Q^{BS^1}\to\Mod_{\Q^{hS^1}}(\Sp_\Q)$ is fully faithful with essential image given by those modules over $\Q^{hS^1}\simeq \Q\llrrbra t$ that are complete with respect to the $t$-adic filtration.
\end{theorem}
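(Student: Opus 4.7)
The plan is to reduce to Koszul duality via the formality established in Section \ref{formal section}. By Corollary \ref{formalaction} applied with $k=\Q$, there is a symmetric monoidal equivalence $\Sp_\Q^{BS^1} \simeq \Mod_A(\Sp_\Q)$ where $A = \Lambda_\Q(\epsilon)$ with $\epsilon$ primitive in degree $1$. Under this equivalence the trivial $S^1$-action on $\Q$ corresponds to the augmentation $A$-module (with $\epsilon$ acting by zero), and $(-)^{hS^1}$ is identified with $\map_A(\Q,-)$. The resolution $P_\bullet = A\langle t^\vee\rangle$ with $|t^\vee|=2$ and $dt^\vee=\epsilon$ from Lemma \ref{computation} then computes $\map_A(\Q,\Q)\simeq \Q\llrrbra t$ with $|t|=-2$, justifying the indicated identification of $\Q^{hS^1}$.

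With these identifications the theorem becomes the Koszul duality statement that $F := \map_A(\Q,-)\colon \Mod_A(\Sp_\Q)\to\Mod_{\Q\llrrbra t}(\Sp_\Q)$ is fully faithful with essential image the $t$-complete modules. To prove this I would construct the left adjoint $L$ explicitly: $\Q$ carries a natural $A$-$\Q\llrrbra t$-bimodule structure (left $A$-action via the augmentation $A\to\Q$, right $\Q\llrrbra t$-action via $\Q\llrrbra t\to\Q$, compatibility coming from the Koszul duality identification $A \simeq \map_{\Q\llrrbra t}(\Q,\Q)$), and setting $L(M) := M\otimes_{\Q\llrrbra t}\Q$ gives a functor left adjoint to $F$. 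Using the Koszul tower $\Q \simeq \lim_n \Q\llrrbra t/t^n$ over $\Q\llrrbra t$ one computes $FL(M) \simeq \lim_n M/t^n \simeq M^\wedge_t$, so $FL$ is exactly the $t$-adic completion functor. This immediately pins down the essential image: $F$ factors through $t$-complete modules, and every $t$-complete module lies in the image since the unit $M \to FL(M)$ is an equivalence precisely on complete $M$.

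Fully faithfulness of $F$, equivalently the counit $LF\to\id$ being an equivalence, is then verified by checking on the free generator $A\in\Mod_A$, where the computation with $P_\bullet$ unravels, and propagating to all of $\Mod_A$ via the conservativity of $F$ on $t$-complete modules (any nonzero $t$-complete module has non-vanishing derived mod-$t$ reduction). The main obstacle is making the $A$-$\Q\llrrbra t$-bimodule structure on $\Q$ coherent at the $E_\infty$-level: translating the Koszul duality $A \simeq \map_{\Q\llrrbra t}(\Q,\Q)$ into a rigorous bimodule structure requires some care, but since the formality of Section \ref{formal section} presents both algebras as commutative, this reduces to a standard bar-cobar manipulation.
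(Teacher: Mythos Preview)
The paper does not prove this statement; it merely cites it as a special case of Theorem~7.35 in \cite{MNN17}. Your proposal therefore goes beyond what the paper does, and the Koszul duality strategy you outline is indeed the right way to prove it directly (and is morally what underlies the cited result in the rational case).

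That said, there is a genuine gap in your computation of $FL(M)$. The sentence ``using the Koszul tower $\Q \simeq \lim_n \Q\llrrbra t/t^n$ over $\Q\llrrbra t$ one computes $FL(M) \simeq \lim_n M/t^n$'' does not work: the tower $\Q\llrrbra t/t^n$ is constant when tensored with $\Q$ (since $t$ already acts by zero), so it computes nothing; and more fundamentally $L = (-)\otimes_{\Q\llrrbra t}\Q$ is a \emph{left} adjoint and has no reason to commute with the inverse limit you would need. What you actually get from the Koszul resolution is that the underlying $\Q$-module of $L(M)$ is $M/t$, not $M^\wedge_t$.

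A cleaner route: use the resolution $P_\bullet$ to show directly that $F(X)/t \simeq X$ as underlying $\Q$-modules (the cofibre of multiplication by $t$ on $(X\llrrbra t,td)$ is $X$). This gives two things at once: $F$ is conservative, and $F$ lands in $t$-complete modules (the $t$-adic tower on $F(X)$ has associated gradeds $X[-2n]$, hence is complete). Now the triangle identity forces $F$ applied to the counit $LF(X)\to X$ to be a retract of the unit on $F(X)$; once you know the unit is an equivalence on the essential image of $F$ (which you can check on $F(A)\simeq\Q$ and propagate through the $t$-complete category, where $\Q$ is a generator), conservativity of $F$ finishes the job. Your final paragraph gestures at this but conflates it with the flawed $FL$ computation; separating the two steps would close the gap.
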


The Thom isomorphism over $\Q$ for complex vector bundles over $BS^1$ gives an $S^1$-equivariant equivalence of $\S^V\otimes \Q\simeq \Q[2]$ with trivial $S^1$-action on the right. Thus the map $\S\otimes \Q\to \S^V\otimes \Q\simeq \Q[2]$ in $\Sp_\Q^{BS^1}$ corresponds to an $\Q\llrrbra t$-module map $\Q\llrrbra t\to \Q\llrrbra t[2]$
for $|t|=-2$. In particular as a $\Q\llrrbra t$-module map it is determined by the image of 1 in $\Q\cdot t$. Because this map is not 0 as seen in the proof of Proposition \ref{convergence to tate}, up to a unit, it is given by multiplication by $t$. More generally this argument gives an identification of the image of the filtration $\S^{-\bullet V}\otimes \Q$ under $(-)^{hS^1}$ with the filtration

\[\cdots\xrightarrow{\cdot t} \Q\llrrbra t[-2n]\xrightarrow{\cdot t}\Q\llrrbra t\xrightarrow{\cdot t}\Q\llrrbra t[2n]\xrightarrow{\cdot t}\cdots\]
of $\Q\llrrbra t$-modules.

\begin{lemma}\label{mult}
	The filtration $\S^{-\bullet V}\otimes \Q$ can be given a commutative algebra structure in $\Fil(\Sp_\Q^{BS^1})$.
\end{lemma}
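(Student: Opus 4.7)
The plan is to recognise $\S^{-\bullet V}\otimes\Q$ as the image of the classical $t$-adic filtration on the Laurent series ring $\Q\llrrpar{t}$ under the algebraicisation equivalence of Theorem \ref{cite2}, and to transport the evident commutative multiplication back. To do so I would first upgrade the equivalence of Theorem \ref{cite2} to a symmetric monoidal equivalence. The functor $(-)^{hS^1}$ is lax symmetric monoidal, and its essential image (the $t$-adically complete $\Q\llrrbra{t}$-modules) inherits a symmetric monoidal structure, namely the $t$-adically completed tensor product, for which the embedding becomes strong symmetric monoidal; this is the content of the symmetric monoidal refinement of the Greenlees--Shipley type equivalence also found in \cite{MNN17}. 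Passing to filtered objects with Day convolution on both sides produces a symmetric monoidal equivalence $\Fil(\Sp_\Q^{BS^1})\simeq\Fil(\Mod_{\Q\llrrbra{t}}^{\mathrm{compl}})$.

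Next I would appeal to the computation carried out in the main text just above: the image of $\S^{-\bullet V}\otimes\Q$ under $(-)^{hS^1}$ is identified with the filtered $\Q\llrrbra{t}$-module whose $n$-th piece is $\Q\llrrbra{t}[-2n]$ and whose transition maps are given, up to units, by multiplication by $t$. This is precisely the $t$-adic filtration on the Laurent series ring $\Q\llrrpar{t}=\Q\llrrbra{t}[t^{-1}]$, viewed as a $\Z$-indexed decreasing filtration by the $\Q\llrrbra{t}$-submodules $t^n\Q\llrrbra{t}\subset\Q\llrrpar{t}$. But $\Q\llrrpar{t}$ is an ordinary commutative $\Q\llrrbra{t}$-algebra and the inclusion $t^n\Q\llrrbra{t}\cdot t^m\Q\llrrbra{t}\subset t^{n+m}\Q\llrrbra{t}$ upgrades the $t$-adic filtration to a commutative algebra in $\Fil(\Mod_{\Q\llrrbra{t}})$; it is moreover complete since $\bigcap_n t^n\Q\llrrbra{t}=0$ inside $\Q\llrrpar{t}$. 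Transporting this commutative algebra object back along the equivalence above yields the desired commutative algebra structure on $\S^{-\bullet V}\otimes\Q$ in $\Fil(\Sp_\Q^{BS^1})$.

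I expect the main obstacle to be the symmetric monoidal upgrade: Theorem \ref{cite2} as stated produces only a fully faithful embedding of $\infty$-categories, whereas the argument needs the lax symmetric monoidal structure on $(-)^{hS^1}$ to become strong after restricting to complete modules and replacing the target tensor product by its completion. This can be established either by appealing directly to the symmetric monoidal version of Theorem 7.35 of \cite{MNN17}, or by a direct check on associated graded for the $t$-adic filtration, where both the lax structure map and its target reduce to ordinary $\Q$-linear tensor products and agree by the classical Künneth isomorphism.
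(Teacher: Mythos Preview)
Your approach is essentially the same as the paper's: both pass through the algebraicisation of Theorem~\ref{cite2} and then exhibit the image as an explicit commutative $\Q\llrrbra t$-algebra. The only difference is packaging. The paper additionally invokes Theorem~\ref{cite1} to translate filtered objects into graded $\unit[\beta]$-modules, and then writes the algebra as $\Q\llrrbra t[s^{\pm1}]$ with $\beta\mapsto st$; under the filtered/graded dictionary this is precisely your $t$-adic filtration on $\Q\llrrpar t$. Your route avoids Theorem~\ref{cite1} at the cost of checking the Day-convolution multiplicativity directly, while the paper's route reduces that check to a one-line graded-ring construction. Both arguments rely equally on the symmetric monoidal upgrade of Theorem~\ref{cite2} that you flag; the paper simply asserts it in the phrase ``under the symmetric monoidal equivalences from the cited Theorems''.
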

\begin{proof}
	Under the symmetric monoidal equivalences from the cited Theorems \ref{cite1} and \ref{cite2} we are reduced to equip the underlying graded object $\bigoplus_{n\in\Z}\Q\llrrbra t[-2n]$ of $(\S^{-\bullet V}\otimes \Q)^{hS^1}$ with a commutative algebra structure over $\Q\llrrbra t[\beta]$. To avoid confusion, let us introduce a formal variable $s$ in grading degree $-1$ and homological degree 2 to get an identification of underlying objects
	\[\bigoplus_{n\in\Z}\Q\llrrbra t[-2n]\simeq \Q\llrrbra t[s^{\pm 1}],\]
	which is the free graded commutative $\Q\llrrbra t$-algebra on the variables $s^{\pm 1}$. In particular sending $\beta$ to $s\cdot t$ gives $\Q\llrrbra t[s^{\pm 1}]$ the desired commutative algebra structure.
\end{proof}

\begin{proposition}
	For a commutative ring $k$ with $\Q\subset \pi_0 k$ and $R\in\CAlg_k^{BS^1}$ the filtration $\Fil_T R^{tS^1}$ permits the structure of a commutative algebra in $\Fil(\Mod_k)$.
\end{proposition}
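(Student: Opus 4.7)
The plan is that, once Lemma~\ref{mult} is granted, the commutative algebra structure on $\Fil_T^\bullet R^{tS^1}$ is obtained by transporting the commutative algebra structure on $\S^{-\bullet V}\otimes \Q\in \CAlg(\Fil(\Sp_\Q^{BS^1}))$ along two lax symmetric monoidal functors: first, base change along $-\otimes_\Q R\colon \Sp_\Q^{BS^1}\to \Mod_k^{BS^1}$, and second, the homotopy fixed point functor $(-)^{hS^1}\colon \Mod_k^{BS^1}\to\Mod_k$. Each of these lifts levelwise to a lax symmetric monoidal functor between the corresponding Day convolution categories of filtrations, so commutative algebra structures are preserved at every step.

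For the first functor, the lax structure map
\[(X\otimes_\Q R)\otimes_k(Y\otimes_\Q R)\simeq X\otimes_\Q Y\otimes_\Q(R\otimes_k R)\to X\otimes_\Q Y\otimes_\Q R\]
is induced by the multiplication of the commutative algebra $R$, and the unit $k\to R$ is the unit map of $R$. Applying this levelwise, the commutative algebra $\S^{-\bullet V}\otimes\Q$ from Lemma~\ref{mult} produces $\S^{-\bullet V}\otimes R$ as a commutative algebra in $\Fil(\Mod_k^{BS^1})$. The identification $\S^{-nV}\otimes_\Q R \simeq \S^{-nV}\otimes R$ is harmless since $R$ is already rational under the assumption $\Q\subset \pi_0k$.

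For the second functor, $(-)^{hS^1}$ is the right adjoint of the symmetric monoidal pullback $\Mod_k\to \Mod_k^{BS^1}$ along the terminal map $BS^1\to \ast$, and is therefore lax symmetric monoidal. Applied levelwise to the filtered commutative algebra $\S^{-\bullet V}\otimes R$ we obtain
\[\Fil_T^\bullet R^{tS^1}=(\S^{-\bullet V}\otimes R)^{hS^1}\in \CAlg(\Fil(\Mod_k)),\]
which completes the argument. All conceptual content is thus concentrated in Lemma~\ref{mult}; I do not expect a substantive obstacle in this final step beyond verifying that postcomposition with a lax symmetric monoidal functor yields a lax symmetric monoidal functor on Day convolution categories, which is a formal property of the universal property of Day convolution.
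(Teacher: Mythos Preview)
Your proposal is correct and follows essentially the same approach as the paper: transport the commutative algebra structure on $\S^{-\bullet V}\otimes\Q$ from Lemma~\ref{mult} through the lax symmetric monoidal functors ``tensor with $R$'' and $(-)^{hS^1}$. The only cosmetic difference is that the paper phrases this as showing the whole functor $\Fil_T(-)^{tS^1}\colon \Mod_k^{BS^1}\to\Fil(\Mod_k)$ is lax symmetric monoidal---by factoring it as $(-)\otimes(\S^{-\bullet V}\otimes k)$ followed by levelwise $(-)^{hS^1}$---rather than working with a fixed $R$, but the content is identical.
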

\begin{proof}
	By construction $\Fil_T (-)^{tS^1}$ is the composite
	\[\Mod_k^{BS^1}\xrightarrow{(-)\otimes (\S^{-\bullet V}\otimes k)}\Fil(\Mod_k^{BS^1})\xrightarrow{(-)^{hS^1}}\Fil(\Mod_k).\]
	The second functor has a canonical lax structure. Because $k$ is a commutative algebra over $\Q$, also $(\S^{-\bullet V}\otimes k)$ inherits a commutative algebra structure via Lemma \ref{mult} and thus $\Fil_T(-)^{tS^1}$ refines to a lax symmetric monoidal functor and sends commutative algebras in $\Mod_k^{BS^1}$ to commutative algebras in $\Fil(\Mod_k)$.
\end{proof}

Given $C_*\in\CDGA_\Q\xhookrightarrow\iota\Sp_\Q^{BS^1}$ we would like to conclude this section with the comparison of the induced filtration on $\Fil_T^{\geq 0} UC_*^{tS^1}$ on the zeroth part $\Fil_T^0 UC_*^{tS^1}\simeq UC_*^{hS^1}$ to concrete filtrations on the chain level. 

\begin{proposition}\label{t-adic filtration}
	In the notation of Lemma \ref{computation}, there is an identification of the $t$-adic filtration on $UC_*^{hS^1}\simeq (UC_*\llrrbra t, td)$ with the Tate filtration $\Fil_T^{\geq 0} UC_*^{tS^1}$.
\end{proposition}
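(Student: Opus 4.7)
The plan is to compare the two filtrations on $UC_*^{hS^1}$ by constructing a natural map via the module structure over $\Fil_T^\bullet k^{tS^1}$ and then verifying it is an equivalence on associated graded. First, by the preceding Proposition, $\Fil_T^\bullet UC_*^{tS^1}$ is a commutative algebra in $\Fil(\Mod_k)$, and the unit map exhibits it as a module over $\Fil_T^\bullet k^{tS^1}$. By Lemma \ref{mult} together with the subsequent identification of $\Fil_T^\bullet k^{tS^1}$ with the $t$-adic filtration on $k\llrrbra{t}\simeq k^{hS^1}$, the induced $k\llrrbra{t}$-module structure on $UC_*^{hS^1}\simeq (UC_*\llrrbra{t},td)$ (via Lemma \ref{computation}) is the evident Laurent series multiplication. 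Multiplication by $t^n$ therefore lands $\Fil_T^0 UC_*^{tS^1}$ in $\Fil_T^n UC_*^{tS^1}$, yielding a natural map of filtrations
\[
t^\bullet(UC_*\llrrbra{t})\longrightarrow \Fil_T^{\geq \bullet}UC_*^{tS^1}.
\]

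Next, I would verify that both filtrations are complete, so that it suffices to check the comparison on associated graded. The Tate filtration is complete by Proposition \ref{convergence to tate}. For the $t$-adic filtration, $UC_*^{hS^1}$ lies in the essential image of the fully faithful functor $(-)^{hS^1}\colon\Sp_\Q^{BS^1}\hookrightarrow\Mod_{k\llrrbra{t}}(\Sp_\Q)$ of Theorem \ref{cite2}, whose image consists precisely of $t$-adically complete modules.

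Finally, I would check that both $\gr^n$'s compute $UC_*[-2n]$: for the Tate filtration this is a standard property, and for the $t$-adic one, injectivity of multiplication by $t$ on $UC_*\llrrbra{t}$ gives $t^nUC_*\llrrbra{t}/t^{n+1}UC_*\llrrbra{t}\simeq t^n\cdot UC_*\simeq UC_*[-2n]$, with the differential $td$ vanishing modulo $t^{n+1}$. The induced map on $\gr^n$ is the module action $\gr^n k^{tS^1}\otimes_k \gr^0 UC_*^{tS^1}\to\gr^n UC_*^{tS^1}$, which under the above identifications is the canonical isomorphism $k[-2n]\otimes_k UC_*\simeq UC_*[-2n]$. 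The main subtlety I expect is cleanly tracing through the symmetric monoidal equivalences of Theorems \ref{cite1} and \ref{cite2} to confirm this final identification of the module action on associated graded, which should ultimately reduce to the observation in Lemma \ref{mult} that $\gr^\bullet k^{tS^1}\simeq k[s]$ acts via the free polynomial generator $s$ in degree $-2$.
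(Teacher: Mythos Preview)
Your approach is genuinely different from the paper's, and the outline is sound. The paper argues more directly: it rewrites $\Fil_T^{\geq 0} UC_*^{tS^1}$ under the equivalence $\Sp_\Q^{BS^1}\simeq\Mod_A\Sp_\Q$ as $\map_A(\Q[2\bullet], UC_*)$, replaces $\Q$ by the explicit Koszul resolution $P_*=(A\langle t^\vee\rangle,d_P)$ already used in Lemma~\ref{computation}, and then observes that the transition maps $P_*[2n+2]\to P_*[2n]$ are nonzero $A$-module maps determined by the image of $t^\vee$, hence up to a unit are dual to multiplication by $t$. This immediately identifies the Tate transitions with the $t$-adic ones on the explicit chain model $(UC_*\llrrbra t,td)$; no separate completeness check or associated-graded comparison is needed.

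Your route via the $\Fil_T^\bullet k^{tS^1}$-module action is correct up to the last step, but that step is where the real content lies. You identify the induced map on $\gr^n$ with the module action $\gr^n k^{tS^1}\otimes_k \gr^0 UC_*^{tS^1}\to\gr^n UC_*^{tS^1}$, and you know both source and target are abstractly $UC_*[-2n]$; but knowing this does not tell you the map between them is an equivalence. What you actually need is that $\gr^* \Fil_T X^{tS^1}$ is \emph{free of rank one} over $\gr^* \Fil_T k^{tS^1}\simeq k[s^{\pm 1}]$ on the class of $X$ in degree~$0$. This is true --- it follows from the cofiber sequence $\S[S^1][-2n-1]\to\S^{-(n+1)V}\to\S^{-nV}$ together with the induced-module identification $(\S[S^1]\otimes X)^{hS^1}\simeq X$, carried through compatibly with the lax monoidal structure --- but it is not established in the paper and is exactly the ``subtlety'' you flag without resolving. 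The paper's explicit-resolution argument bypasses this entirely: once the transition maps are seen to be multiplication by $t$ on the chain model, the two filtrations coincide on the nose, so no module-theoretic freeness statement is needed.
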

\begin{proof}
	Using the cocommutative bialgebra $A:= \Q[\epsilon]/\epsilon^2$ as defined in Construction \ref{A} we have the symmetric monoidal equivalence of categories $\Sp_\Q^{BS^1}\xrightarrow\sim\Mod_A\Sp_\Q$ (Corollary \ref{formalaction}). Therefore the filtration $\Fil^{\geq 0}_TUC_*^{tS^1}$ reads as
	\[\cdots \to\map_A(\Q,\Q[-4]\otimes C_*)\to\map_A(\Q,\Q[-2]\otimes C_*)\to\map_A(\Q,\Q\otimes C_*)\simeq UC_*^{hS^1}\]
	By duality this filtration is equivalently induced by the maps $\Q[2n]\to\Q[2n+1]$ from $\S^{-\bullet V}\otimes \Q$ for $n\geq 0$ in the first variable of the mapping spectrum. Choosing $P_*=(A\langle t^\vee\rangle, d_P)$ for $t^\vee$ primitive in degree 2 and $d_P(t^\vee)=\epsilon$ as in the proof of Lemma \ref{computation}, these maps
	\[
	\begin{tikzcd}
	P_*[2n]\ar[d]&\cdots\ar[r]&k\cdot (t^\vee)^2\ar[r,"\sim"]\ar[d]&k\cdot \epsilon t^\vee\ar[r,"0"]\ar[d]&k\cdot t^\vee\ar[r,"\sim"]\ar[d]&k\cdot \epsilon\ar[r,"0"]&k\\
		P_*[2n+1]&\cdots\ar[r]&k\cdot t^\vee\ar[r,"\sim"]&k\cdot \epsilon\ar[r,"0"]&k
	\end{tikzcd}
	\]
	are uniquely determined as $A$-module maps by the image of $t^\vee$. Because again the map is non-zero, the image of $t^\vee$ has to be a unit. In particular, up to isomorphism the maps
	\[C_*^{hS^1}[-2n-2]\to C_*^{hS^1}[-2n]\]
	are given by multiplication with the dual $t$ in $(C_*\llrrbra t, td)$.
\end{proof}

\printbibliography

\end{document}